\numberwithin{equation}{section}
\def\3bar{{|\hspace{-.02in}|\hspace{-.02in}|}}
\def\E{{\mathcal{E}}}
\def\T{{\mathcal{T}}}
\def\bpsi{\boldsymbol{\psi}}
\def\pT{{\partial T}}
\def\W{{\mathcal{W}}}
\def\bw{{\mathbf{w}}}
\def\bu{{\mathbf{u}}}
\def\bg{{\mathbf{g}}}
\def\bv{{\mathbf{v}}}
\def\bn{{\mathbf{n}}}
\def\bQ{{\mathbf{Q}}}
\def\bq{{\mathbf{q}}}
\def\be{{\mathbf{e}}}
\def\bw{{\mathbf{w}}}
\def\bf{{\mathbf{f}}}
\def\bphi{{\boldsymbol{\phi}}}
\def\bsigma{{\boldsymbol{\sigma}}}
\def\bepsilon{{\boldsymbol{\epsilon}}}
\def\bphi{{\boldsymbol{\phi}}}
\def\ljump{{[\![}}
\def\rjump{{]\!]}}
\def\bsigma{{\boldsymbol{\sigma}}}
\newtheorem{algorithm}{Weak Galerkin Algorithm}[section]
\title {A Weak Galerkin Method for Elasticity Interface Problems}
\begin{document}

\author{
Chunmei Wang \thanks{Department of Mathematics, University of Florida, Gainesville, FL 32611, USA (chunmei.wang@ufl.edu). The research of Chunmei Wang was partially supported by National Science Foundation Award DMS-2136380.}
 \and  
 Shangyou Zhang\thanks{Department of Mathematical Sciences,  University of Delaware, Newark, DE 19716, USA (szhang@udel.edu).  }}

\maketitle

\begin{abstract}
 This article introduces a weak Galerkin (WG) finite element method for linear elasticity interface problems on general polygonal/ployhedra partitions. The developed WG method has been proved to be stable and accurate with optimal order error estimates in the discrete $H^1$ norm. Some numerical experiments are conducted to verify the efficiency and accuracy of the proposed WG method.
\end{abstract}

\begin{keywords}  weak Galerkin, WG, finite element methods, elasticity interface problems,  polygonal or polyhedral partition.
\end{keywords}

\begin{AMS}
Primary, 65N30, 65N15, 65N12, 74N20; Secondary, 35B45, 35J50,
35J35
\end{AMS}

\pagestyle{myheadings}

\section{Introduction}
 In this paper we are concerned with the development of weak Galerkin (WG) finite element methods for elasticity interface problems. To this end, assume $\Omega\subset \mathbb R^d$ ($d = 2, 3$) is an open bounded domain with piecewise smooth Lipschitz boundary $\partial \Omega$. Let $N$ and $M$ be two positive integers. The domain $\Omega$ is partitioned into a set of subdomains $\{\Omega_i\}_{i=1}^N$ with piecewise smooth Lipschitz
boundary $\partial \Omega_i$ for $i=1, \cdots, N$; $\Gamma=\bigcup_{i=1}^N \partial \Omega_i\setminus \partial \Omega$ is the interface between the subdomains in the sense that
$$
\Gamma =\bigcup_{m=1}^{M} \Gamma_m,
$$
where there exist $i, j \in \{1, \cdots, N\}$ such that $\Gamma_m=\partial \Omega_i \cap\partial \Omega_j$ for $m=1, \cdots, M$.
 We consider the following elasticity interface problem: Find the displacement $\bu$ such that
\begin{equation}\label{model}
\begin{split}
\bsigma(\bu_i)&=2\mu_i\bepsilon(\bu_i)+\lambda_i\nabla \cdot \bu_i I, \qquad \text{in}\ \Omega_i, i=1, \cdots, N,\\
-\nabla \cdot \bsigma(\bu_i) &=\bf_i, \qquad\qquad\qquad\qquad\qquad \text{in}\ \Omega_i,i=1, \cdots, N,\\
\ljump \bu\rjump_{\Gamma_m}&=\bphi_m, \qquad\qquad\qquad\qquad\qquad \text{on}\ \Gamma_m, m=1,\cdots, M,\\
\ljump \bsigma(\bu)\bn\rjump_{\Gamma_m}&=\bpsi_m, \qquad\qquad\qquad\qquad\qquad \text{on}\ \Gamma_m,m=1,\cdots, M,\\
\bu_i&=\bg_i,\quad\qquad \qquad\qquad\qquad\qquad \text{on}\ \partial\Omega_i\cap \partial\Omega, i=1,\cdots, N,
\end{split}
\end{equation}
where $\bu_i=\bu|_{\Omega_i}$, $\bf_i=\bf|_{\Omega_i}$, $\bg_i=\bg|_{\Omega_i}$, $\mu_i=\mu|_{\Omega_i}$, $\lambda_i=\lambda|_{\Omega_i}$,   $[\![\bsigma(\bu)  \bn]\!]_{\Gamma_m}=\bsigma(\bu_i)  \bn_i+ \bsigma(\bu_j)  \bn_j$ with $\bn_i$ and $\bn_j$ being the unit outward normal directions to $\partial \Omega_i\cap \Gamma_m$ and $\partial \Omega_j\cap \Gamma_m$, and $[\![\bu]\!]_{\Gamma_m}=\bu_i|_{  \Gamma_m}-\bu_j|_{\Gamma_m}$. 
Throughout this paper, we use bold face letters to denote vector valued functions and their associated function spaces. Here, $\bepsilon=\frac{1}{2} (\nabla \bu+\nabla\bu^T)$ is the strain tensor; $\bf$ represents a given body force; and $\lambda$ and $\mu$ are the positive Lam$\acute{e}$ parameters. Regarding to the Young’s modulus $E$, and the Poisson's ratio $\nu$,   the following identities hold true; i.e.,
$$
\lambda=\frac{E\nu}{(1-2\nu)(1+\nu)}, \qquad \mu=\frac{E}{2(1+\nu)}.
$$
The Lam$\acute{e}$ parameters $\lambda$ and $\mu$ are assumed to be piecewise smooth functions with respect to the partition $\Omega=\bigcup_{i=1}^N \Omega_i$. 
We assume that $\frac{\lambda}{\mu}=\frac{2\nu}{1-2\nu}$ is bounded.


Elasticity interface problems play an important role in continuum mechanics where the elasticity theory and the governing partial differential equations (PDEs) describe various material behaviors.  An interface description for this class of problem in the elasticity theory is indispensable whenever there are voids, pores, inclusions, dislocations, cracks or composite structures in materials \cite{w1, w2, w3, w4}. In particular, the elasticity interface problems are crucial in tissue engineering, biomedical science and biophysics  \cite{w5, w6, w7}. In many situations, the interface is not static such as fluid–structure interfacial boundaries \cite{w8}. Discontinuities in material properties often occur over the interface.  
When an elastic body is occupied by heterogeneous materials with distinct physical parameters, the governing equation holds on each disjoint domain. The solution to the governing equation is required to satisfy the displacement and traction jump conditions along the interface between different materials besides the usual boundary conditions.
In the linear elasticity theory, the stress–strain relation is governed by the constitutive equations. For the isotropic homogeneous materials, constitutive equations are determined by any two terms of bulk modulus, Young’s modulus, Lam$\acute{e}$’s first parameter, shear modulus, Poisson’s ratio, and P-wave modulus  \cite{w9}. For these moduli being position dependent functions, the related constitutive equations describe elasticity property of isotropic inhomogeneous media. In seismic wave equations, inhomogeneity is accounted by assuming Lam$\acute{e}$'s parameters to be a position dependent function  \cite{w10}.

Many numerical methods have been designed for elasticity interface problems. The immersed interface method (IIM)  \cite{w15}  was proposed to solve elasticity interface problems for isotropic homogeneous media \cite{w16, w17}. A second-order sharp numerical method was developed for linear elasticity equations  \cite{w18}. Finite element methods including the partition of unity method (PUM), the generalized finite element method (GFEM) and the extended finite element method (XFEM) were developed to capture the non-smooth property of the solution over the interface by adding enrichment functions to the approximation  \cite{w2, w3, w4}. The discontinuous Galerkin finite element methods were employed to simulate strong and weak discontinuities  \cite{w19, w20, w21}  through the weak enforcement of the continuity. The immersed finite element method (IFM) was proposed to solve elasticity problems with inhomogeneous jump conditions  \cite{w22, w23, w24}. The sharp-edged interface was proposed for a special elasticity interface problem  \cite{w25}. The  bilinear IFM was introduced and further modified to a locking-free version \cite{w26, w27}. The immersed meshfree Galerkin method was proposed for composite solids \cite{w28}. A Nitsche type method was proposed for elasticity interface problems \cite{w29}.  
 
The weak formulation for the elasticity interface model problem \eqref{model} is as follows: Find $\bu$ satisfying $\bu_i=\bg_i$ on $\partial\Omega_i\cap\partial \Omega$ ($i=1, \cdots, N$), such that
\begin{equation}\label{weakform}
(2\mu\epsilon(\bu), \epsilon(\bv))+(\lambda \nabla\cdot\bu, \nabla\cdot\bv)=(\bf, \bv)+\sum_{m=1}^M\langle\bpsi,\bv\rangle_{\Gamma_m}, \quad\forall \bv\in [H_0^1(\Omega)]^d.
\end{equation}

We follow the standard notations for Sobolev spaces
and norms defined on a given open and bounded domain $D\subset \mathbb{R}^d$ with
Lipschitz continuous boundary. As such, $\|\cdot\|_{s,D}$ and
$|\cdot|_{s,D}$ are used to denote the norm and seminorm in the Sobolev space
$H^s(D)$ for any $s\ge 0$. The inner product in $H^s(D)$
is denoted by $(\cdot,\cdot)_{s,D}$ for $s\ge 0$. The space $H^0(D)$ coincides with
$L^2(D)$ (i.e., the space of square integrable functions), for which the norm and the inner product are denoted as $\|\cdot \|_{D}$ and $(\cdot,\cdot)_{D}$.  When
$D=\Omega$ or when the domain of integration is clear from the
context, we shall drop the subscript $D$ in the norm and the inner product
notation.

The paper is organized as follows. In Section 2, we briefly review the weak differential operators and their discrete analogies. In Section 3,  the WG method for the model problem \eqref{model} based on the weak formulation (\ref{weakform}) is proposed. In Section 4, we establish the solution existence, uniqueness, and stability. In Section 5, we derive an error equation for the WG solutions. The optimal order error estimate for the exact solution in the discrete $H^1$ norm is established in Section 6. Finally, a couple of numerical results to illustrated and verify our convergence theory are reported in Section 7.

 

\section{Weak Differential Operators}\label{Section:Hessian}
The two principal differential operators in the weak formulation (\ref{weakform}) for the elasticity interface model problem (\ref{model}) are the divergence operator $(\nabla\cdot)$ and the gradient operator $\nabla$.  The discrete weak versions for $(\nabla\cdot)$ and $\nabla$ have been introduced in \cite{welasticity, wy3655}. For completeness, we shall briefly review their definitions in this section.

Let $T$ be a polygonal or polyhedral domain with boundary $\partial T$. A vector-valued weak function on $T$ refers to  $\bv=\{\bv_0,\bv_b\}$ with $\bv_0\in [L^2(T)]^d$ and $\bv_b\in [L^{2}(\partial T)]^d$. Here $\bv_0$ and $\bv_b$ are used to represent the values of $\bv$ in the interior and on the boundary of $T$ respectively. Note that $\bv_b$ may not necessarily be the trace of $\bv_0$ on $\partial T$. Denote by $\W(T)$ the space of weak functions on $T$; i.e.,
\begin{equation}\label{2.1}
\W(T)=\{\bv=\{\bv_0,\bv_b\}: \bv_0\in [L^2(T)]^d, \bv_b\in
[L^{2}(\partial T)]^d\}.
\end{equation}

The weak divergence of $\bv\in \W(T)$, denoted by $\nabla_w \cdot  \bv$, is defined as a linear functional on $H^1(T)$ such that
\begin{equation*}
(\nabla_w \cdot \bv, w)_T= -(\bv_0, \nabla w)_T+\langle \bv_b, w \textbf{n}\rangle_{\partial T},
\end{equation*}
for all $w \in  H^1(T)$, where $\bn$ is an unit outward normal direction to $\partial T$.

The weak gradient of $\bv\in \W(T)$, denoted by $\nabla_w \bv$, is defined as a linear functional on $[H^1(T)]^{d\times d}$ such that
\begin{equation*}
(\nabla_w  \bv, \bw)_T=-(\bv_0,\nabla \cdot \bw)_T+\langle \bv_b,\bw  \textbf{n}\rangle_{\partial T},
\end{equation*}
for all $\bw\in [H^1(T)]^{d\times d}$.

Denote by $P_r(T)$ the space of polynomials on $T$ with degree no more than $r$. A discrete version of $\nabla_w \cdot \bv$ for $\bv\in \W(T)$, denoted by $\nabla_{w, r, T}
\cdot \bv$, is defined as the unique polynomial in $P_r(T)$ satisfying
\begin{equation}\label{divgence}
(\nabla_{w, r, T}\cdot \bv, w)_T= -(\bv_0, \nabla w)_T+\langle \bv_b, w \textbf{n}\rangle_{\partial T},\quad\forall w \in P_r(T).
\end{equation}

A discrete version of $\nabla_{w}\bv$  for $\bv\in \W(T)$, denoted by $\nabla_{w, r, T}\bv$, is defined as a unique polynomial matrix in $[P_r(T) ]^{d\times d}$ satisfying
\begin{equation}\label{disgradient}
(\nabla_{w, r, T}\bv, \bw)_T=-(\bv_0,\nabla \cdot \bw)_T+\langle \bv_b,\bw  \textbf{n}\rangle_{\partial T}, \quad\forall\bw\in [P_r(T)]^{d\times d}.
\end{equation}
Using the weak gradient, we define the weak strain tensor as follows:
$$
\epsilon_w(\bv)=\frac{1}{2}(\nabla_w\bv+\nabla_w\bv^T).
$$
A discrete version of $\epsilon_{w}(\bv)$  for $\bv\in \W(T)$, denoted by $\epsilon_{w, r, T}(\bv)$, is defined as a unique polynomial matrix in $[P_r(T) ]^{d\times d}$ satisfying
\begin{equation}\label{disstrain}
(\epsilon_{w, r, T}(\bv), \bw)_T=-(\bv_0,\epsilon(\bw))_T+\langle \bv_b,\bw  \textbf{n}\rangle_{\partial T}, 
\end{equation}
for any symmetric matrix $\bw\in [P_r(T)]^{d\times d}$.
 

\section{Weak Galerkin Algorithm}\label{Section:WGFEM}
Let ${\cal T}_h$ be a finite element partition of the domain $\Omega$ consisting of polygons or polyhedra that are shape-regular \cite{wy3655}. Assume that the edges/faces of the elements in ${\cal T}_h$ align with the interface $\Gamma$. The partition ${\cal T}_h$ can be grouped into $N$ sets of elements denoted by ${\cal T}_h^{i}={\cal T}_h \cap \Omega_i$, so that each ${\cal T}_h^i$ provides a finite element partition for the subdomain $\Omega_i$ for $i=1, \cdots, N$.
The intersection of the partition ${\cal T}_h$ also introduces a finite element partition for the interface $\Gamma$, denoted by $\Gamma_h$.
Denote by ${\mathcal E}_h$ the
set of all edges or flat faces in ${\cal T}_h$ and  ${\mathcal
E}_h^0={\mathcal E}_h \setminus
\partial\Omega$ the set of all interior edges or flat faces.
Denote by $h_T$ the meshsize of $T\in {\cal T}_h$ and
$h=\max_{T\in {\cal T}_h}h_T$ the meshsize for the partition
${\cal T}_h$.

On each element $T\in {\cal T}_h$, denote by $RM(T)$ the space of rigid motions on $T$ given by
$$
RM(T)=\{\textbf{a}+\eta \textbf{x}: \textbf{a}\in \mathbb R^d, \eta\in so(d)\},
$$
where $\textbf{x}$ is the position vector on $T$ and $so(d)$ is the space of skew-symmetric $d\times d$ matrices. The trace of the rigid motion on each edge $e\subset T$ forms a finite dimensional space denoted by  $P_{RM} (e)$; i.e.,
$$
P_{RM} (e)=\{ \bv\in [L^2(e)]^d: \bv=\tilde{\bv}|_e \ \text{for some}\ \tilde{\bv}\in RM(T), e\subset \partial T\}.
$$

For any integer $k\geq 1$, denote by
$W_k(T)$ the local discrete space of  the weak functions given by
$$
W_k(T)=\{\{\bv_0,\bv_b\}:\bv_0\in [P_k(T)]^d, \bv_b\in
S_k(e), e\subset \partial T\},
$$
where $S_k(e)= [P_{k-1}(e)]^d+P_{RM} (e)$. Since $P_{RM} (e)\subset P_1(e)$, then the boundary component $S_k(e)$ is given by $[P_{k-1} (e)]^d$ for $k>1$ and $P_{RM} (e)$ for $k=1$. 

Patching $W_k(T)$ over all the elements $T\in {\cal T}_h$
through a common value $\bv_b$ on the interior interface $\E_h^0$ (excluding the interface $\Gamma$), we arrive at the following
weak finite element space $W_h$; i.e.,
$$
W_h=\big\{\{\bv_0, \bv_b\}:\{\bv_0, \bv_b\}|_T\in W_k(T), \forall T\in {\cal T}_h \big\}.
$$
Denote by $W_h^0$ the subspace of $W_h$ with homogeneous boundary values; i.e.,
\begin{equation*}\label{wh0}
 W_h^0=\{\{\bv_0,\bv_b\} \in W_h:\  \bv_b|_{\partial\Omega}=0 \}.
\end{equation*}

For simplicity of notation and without confusion, for any $\bv\in
W_h$, denote by $\nabla_w\cdot\bv$, $\nabla_{w}\bv$ and $\epsilon_{w}(\bv)$ the discrete weak actions $\nabla_{w, k-1, T}\cdot\bv$, $\nabla_{w, k-1, T}\bv$ and $\epsilon_{w, k-1, T}(\bv)$  computed by using (\ref{divgence}), (\ref{disgradient}) and \eqref{disstrain} on each element $T$ respectively; i.e.,
 $$
\nabla_w\cdot\bv|_T=\nabla_{w, k-1, T}\cdot(\bv|_T), \qquad \bv\in W_h,
$$
$$
(\nabla_{w}\bv)|_T= \nabla_{w, k-1, T}(\bv|_T), \qquad \bv\in W_h,
$$
$$
(\epsilon_{w, r, T} (\bv)) |_T=\epsilon_{w, k-1, T}(\bv|_T) , \qquad \bv\in W_h.
$$
Denote by $\bQ_b$ the $L^2$ projection onto the space $S_k(e)$.
For any $\bu, \bv\in W_h$, we introduce the
following bilinear forms
\begin{align} \label{EQ:local-stabilizer}
s(\bu, \bv)=&\sum_{T\in {\cal T}_h}s_T(\bu, \bv),
\\
a(\bu, \bv)=&\sum_{T\in {\cal T}_h}a_T(\bu, \bv),  \label{EQ:local-bterm}
\end{align}
where
\begin{equation*}
\begin{split}
s_T(\bu, \bv)=&h_T^{-1}\langle \bQ_b\bu_0-\bu_b, \bQ_b\bv_0-\bv_b\rangle_{\partial T},\\
a_T(\bu, \bv)=&(2\mu \epsilon_w(\bu), \epsilon_w(\bv))_T+(\lambda \nabla_w\cdot\bu, \nabla_w\cdot\bv)_T.
\end{split}
\end{equation*}

The following is the weak Galerkin scheme for the elasticity interface problem (\ref{model}) based on the variational formulation (\ref{weakform}).
\begin{algorithm}\label{WG}
Find $\bu_h \in  W_{h}$, such that  
$\bu_b=\bQ_b\bg_i$ on $\partial\Omega_i\cap\partial\Omega$ for $i=1, \cdots, N$ and $\bu_b^L-\bu_b^R=\bQ_b\bphi_m$ on $\Gamma_m$ for $m=1,\cdots, M$, s.t.
\begin{equation}\label{32}
s(\bu, \bv)+a(\bu, \bv)=(\bf, \bv_0)+\sum_{m=1}^M\langle \bpsi, \bv_b\rangle_{\Gamma_m},  
\end{equation}
for any $\bv\in W_h^0$. Here $\bu_b^L$ and $\bu_b^R$ denote $\bu_b|_{\Omega_i}$ and   $\bu_b|_{\Omega_j}$ if $\Gamma=\partial\Omega_i\cap\partial\Omega_j$ for some $i$ and $j$. 
\end{algorithm}

\section{Stability Analysis}\label{Section:ExistenceUniqueness}

\begin{lemma}\cite{welasticity} (Second Korn's Inequality)
Assume that the domain $\Omega$ is connected, open bounded with Lipschitz continuous boundary. Let $\Xi\subset\partial\Omega$ be a nontrivial portion of the boundary $\partial\Omega$ with dimension $d-1$. For any fixed real number $1\leq p<\infty$, there exists a constant $C$ such that
\begin{equation}\label{k}
\|\bv\|_1\leq C(\|\epsilon (\bv)\|_0+\|\bv\|_{L^p(\Xi)}),
\end{equation}
for any $\bv\in [H^1(\Omega)]^d$.
\end{lemma}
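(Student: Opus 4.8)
The plan is to deduce \eqref{k} from a compactness argument built on top of the classical (un-weighted) Korn inequality. First I would recall the algebraic identity, valid in the distributional sense,
\begin{equation*}
\partial_{jk}^2 v_\ell = \partial_j \epsilon_{k\ell}(\bv) + \partial_k \epsilon_{j\ell}(\bv) - \partial_\ell \epsilon_{jk}(\bv),
\end{equation*}
which shows that every second partial derivative of a component of $\bv$ lies in $H^{-1}(\Omega)$ whenever $\epsilon(\bv)\in[L^2(\Omega)]^{d\times d}$. Combining this with the Ne\v{c}as/Lions lemma — which asserts, on a bounded Lipschitz domain, that $\|w\|_0 \le C(\|w\|_{-1}+\|\nabla w\|_{-1})$, applied to $w=\partial_k v_\ell$ — yields the basic Korn inequality
\begin{equation*}
\|\bv\|_1 \le C_0\big(\|\epsilon(\bv)\|_0 + \|\bv\|_0\big), \qquad \forall\,\bv\in[H^1(\Omega)]^d .
\end{equation*}
This is the deepest ingredient and is where the Lipschitz regularity of $\partial\Omega$ is used; in the present context one would typically just cite it.

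Next I would run a contradiction argument. Suppose \eqref{k} fails; then there is a sequence $\{\bv_n\}\subset[H^1(\Omega)]^d$ with $\|\bv_n\|_1=1$ and $\|\epsilon(\bv_n)\|_0+\|\bv_n\|_{L^p(\Xi)}\to 0$. By the Rellich--Kondrachov theorem a subsequence (still denoted $\bv_n$) converges strongly in $[L^2(\Omega)]^d$ to some $\bv$. Applying the basic Korn inequality to the differences $\bv_n-\bv_m$ shows $\{\bv_n\}$ is Cauchy in $[H^1(\Omega)]^d$, hence $\bv_n\to\bv$ in $[H^1(\Omega)]^d$ and $\|\bv\|_1=1$. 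Passing to the limit gives $\epsilon(\bv)=0$, and since $\Omega$ is connected this forces $\bv\in RM(\Omega)$, i.e. $\bv(\bx)=\ba+\eta\bx$ with $\ba\in\mathbb{R}^d$ and $\eta\in so(d)$. Moreover, continuity of the trace operator from $H^1(\Omega)$ into $L^p(\Xi)$ gives $\bv|_\Xi=\lim_n \bv_n|_\Xi=0$.

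Finally I would show that a rigid motion vanishing on $\Xi$ is identically zero, contradicting $\|\bv\|_1=1$. If $\eta=0$ then $\bv\equiv\ba$ and $\bv|_\Xi=0$ forces $\ba=0$. If $\eta\neq 0$, then since $\eta$ is skew-symmetric its rank is even and hence at least $2$, so the zero set $\{\bx:\ba+\eta\bx=0\}$ is an affine subspace of dimension at most $d-2$; it therefore cannot contain $\Xi$, which has dimension $d-1$. Hence $\bv=0$, the desired contradiction, and \eqref{k} follows. The main obstacle in this scheme is the first paragraph, namely the basic Korn inequality on a general Lipschitz domain, since the compactness and rigid-motion steps are routine once it is available.
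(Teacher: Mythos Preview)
The paper does not prove this lemma at all; it simply quotes the Second Korn Inequality with a citation to \cite{welasticity} and uses it as a black box in the uniqueness argument for the WG scheme. So there is no ``paper's own proof'' to compare against.

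Your compactness argument is the standard route and is correct. The three ingredients---the basic Korn inequality via the identity $\partial_{jk}^2 v_\ell = \partial_j\epsilon_{k\ell}(\bv)+\partial_k\epsilon_{j\ell}(\bv)-\partial_\ell\epsilon_{jk}(\bv)$ together with the Lions--Ne\v{c}as lemma, the Rellich--Kondrachov compactness to upgrade $L^2$ convergence to $H^1$ convergence, and the observation that a rigid motion vanishing on a $(d-1)$-dimensional set must vanish identically---are exactly what one needs. One small technical point: when you invoke ``continuity of the trace operator from $H^1(\Omega)$ into $L^p(\Xi)$'' to pass to the limit on $\Xi$, this is only guaranteed for $p$ up to the critical trace exponent (e.g.\ $p\le 2(d-1)/(d-2)$ for $d\ge 3$). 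The cleanest fix is to note that it suffices to prove \eqref{k} for $p=1$, since H\"older's inequality on the bounded set $\Xi$ gives $\|\bv\|_{L^1(\Xi)}\le |\Xi|^{1-1/p}\|\bv\|_{L^p(\Xi)}$ and hence the case $p=1$ implies all larger $p$; for $p=1$ the trace $H^1(\Omega)\to L^1(\Xi)$ is certainly continuous and your argument goes through verbatim.
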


\begin{theorem}
There exists a unique solution to the weak Galerkin finite element scheme \eqref{32}.
\end{theorem}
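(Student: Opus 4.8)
Since the weak Galerkin scheme \eqref{32} is a linear system with equal numbers of equations and unknowns, it suffices to prove uniqueness, i.e.\ that the homogeneous problem admits only the trivial solution. So I would take $\bf=0$, $\bg_i=0$, $\bpsi_m=0$ and $\bphi_m=0$, let $\bu_h\in W_h$ solve \eqref{32} with these homogeneous data, and show $\bu_h=0$. The natural first step is to test \eqref{32} with $\bv=\bu_h$ itself, which is admissible since the homogeneous boundary and interface conditions place $\bu_h\in W_h^0$ (with the interface jump $\bu_b^L-\bu_b^R=0$). This yields
\begin{equation*}
s(\bu_h,\bu_h)+a(\bu_h,\bu_h)=0,
\end{equation*}
and since both bilinear forms are nonnegative (note $\mu,\lambda>0$), we conclude $\epsilon_w(\bu_h)=0$ and $\nabla_w\cdot\bu_h=0$ on every $T$, and $\bQ_b\bu_0-\bu_b=0$ on $\partial T$ for every $T$.

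The heart of the argument is to upgrade this to $\bu_h=0$. I would first work element by element: from $\epsilon_{w,k-1,T}(\bu_h)=0$ and the defining identity \eqref{disstrain}, test against arbitrary symmetric $\bw\in[P_{k-1}(T)]^{d\times d}$ to get $(\bu_0,\epsilon(\bw))_T=\langle\bu_b,\bw\bn\rangle_{\partial T}$; combining with $\bQ_b\bu_0=\bu_b$ on $\partial T$ and integrating by parts should force $\epsilon(\bu_0)=0$ on $T$, hence $\bu_0|_T\in RM(T)$ is a rigid motion. Next I would use the single-valuedness of $\bu_b$ on interior edges together with $\bQ_b\bu_0-\bu_b=0$ to patch these local rigid motions across interelement boundaries and across the (now homogeneous) interface, so that $\bu_0$ is a global rigid motion on each connected component of $\Omega$; finally the homogeneous Dirichlet condition $\bu_b|_{\partial\Omega}=0$ (and $\bQ_b\bu_0=\bu_b$ there) pins down that rigid motion to zero, giving $\bu_0\equiv 0$ and then $\bu_b=\bQ_b\bu_0=0$.

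The main obstacle — and the place where the Second Korn's Inequality of the preceding lemma is needed — is the patching step and the conclusion that a rigid motion vanishing on a piece of $\partial\Omega$ is identically zero; more importantly, in the standard WG proof one usually does not argue pointwise but instead introduces the discrete seminorm $\3bar\bv\3bar^2:=s(\bv,\bv)+a(\bv,\bv)$ and proves it is a norm on $W_h^0$ via a discrete Korn inequality $\|\bv_0\|_1\lesssim \3bar\bv\3bar$ obtained by applying \eqref{k} on the "averaged" piecewise-$H^1$ function and controlling the jumps $\bQ_b\bv_0-\bv_b$ by the stabilizer $s(\cdot,\cdot)$. I would therefore structure the proof around establishing that $\3bar\cdot\3bar$ is a genuine norm on $W_h^0$ (equivalently on the affine set with homogeneous interface jumps), since $\3bar\bu_h\3bar=0$ then immediately gives $\bu_h=0$; the delicate point is handling the interface $\Gamma$, where $\bv_b$ is allowed to be double-valued, but under homogeneous jump data the two traces coincide and the usual argument goes through. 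Boundedness of $\lambda/\mu$ guarantees the constants do not degenerate.
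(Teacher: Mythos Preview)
Your proposal is correct and matches the paper's approach closely. Two small clarifications: (i) the paper carries out the integration-by-parts step with the weak \emph{gradient} identity \eqref{disgradient} (showing $\nabla_w\bw=\nabla\bw_0$ once $\bQ_b\bw_0=\bw_b$, and then $\epsilon(\bw_0)=\epsilon_w(\bw)=0$) rather than the weak strain identity, but your variant works equally well; (ii) your worry in the last paragraph is unnecessary---the paper does \emph{not} invoke a discrete Korn inequality or prove $\3bar\cdot\3bar$ is a norm abstractly, but follows exactly the pointwise route you sketch in paragraph two: once $\bw_0|_T\in RM(T)$, one has $\bw_0|_e=\bQ_b\bw_0=\bw_b$ (since $P_{RM}(e)\subset S_k(e)$), so $\bw_0$ is globally continuous with vanishing trace on $\partial\Omega$, and the \emph{continuous} second Korn inequality \eqref{k} applied to this $H^1$ function gives $\bw_0\equiv 0$.
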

\begin{proof}
Since the number of equations is equal to the number of unknowns in \eqref{32}, it suffices to prove the solution uniqueness. To this end, let $\bu_h^{(1)}=\{\bu_0^{(1)}, \bu_b^{(1)}\}$ and $\bu_h^{(2)}=\{\bu_0^{(2)}, \bu_b^{(2)}\}\in W_h$ be two different solutions of \eqref{32}. It follows that $\bu_b^{(j)} (j=1, 2)$ satisfies $\bu_b^{(j)}=\bQ_b\bg_i$ on $\partial\Omega_i\cap\partial\Omega$ for $i=1, \cdots, N$ and $\bu_b^{(j), L}-\bu_b^{(j), R}=\bQ_b\bphi_m$ on $\Gamma_m$ for $m=1,\cdots, M$, and
\begin{equation}\label{t1}
s(\bu^{(j)}, \bv)+a(\bu^{(j)}, \bv)=(\bf, \bv_0)+\sum_{m=1}^M\langle \bpsi, \bv_b \rangle_{\Gamma_m},  \qquad \forall \bv=\{\bv_0, \bv_b\}\in W_h^0, j=1, 2.
\end{equation} 
The difference of the two solutions $\bw=\bu_h^{(1)}-\bu_h^{(2)} \in V_h^0$ satisfies 
\begin{equation}\label{t2}
s(\bw, \bv)+a(\bw, \bv)=0, \qquad \forall \bv=\{\bv_0, \bv_b\} \in W_h^0.
\end{equation} 
This implies that 
\begin{eqnarray}\label{c1}
\epsilon_w(\bw)&=&0, \qquad \text{in each}\ T\in {\cal T}_h,\\ 
\nabla_w\cdot\bw&=&0, \qquad \text{in each}\ T\in {\cal T}_h,\label{c2}\\
Q_b \bw_0&=&\bw_b,  \qquad \text{on each}\ \partial T.\label{c3}
\end{eqnarray}
Using \eqref{disgradient} and the usual integration by parts, we get
\begin{equation*}
\begin{split}
(\nabla_w \bw, \bq)_T&=-(  \bw_0, \nabla \cdot \bq)_T+\langle \bw_b,\bq  \bn\rangle_{\pT}\\
&=(\nabla \bw_0,\bq)_T-\langle \bw_0-\bw_b,\bq  \bn\rangle_{\pT}\\
&=(\nabla \bw_0,\bq)_T-\langle \bQ_b\bw_0-\bw_b,\bq  \bn\rangle_{\pT},\\
\end{split}
\end{equation*}
for any $\bq\in [P_{k-1}(T)]^{d\times d}$. Using \eqref{c3} gives $\nabla_w \bw=\nabla \bw_0$ on each element $T$. Using \eqref{c1}, we have $\epsilon (\bw_0)=\epsilon_w(\bw)=0$, which yields  $\bw_0\in RM(T)\subset [P_1(T)]^d$. 
Using $\bw_0|_e=\bQ_b\bw_0=\bw_b$ we obtain $\bw_0$ is a continuous function in $\Omega$ with vanishing boundary value on $\partial \Omega$. Using the second Korn's inequality \eqref{k} gives $\bw_0\equiv 0$ in $\Omega$, and further $\bw_b\equiv 0$ due to \eqref{c3}. Therefore, we have $\bu_h^{(1)}=\bu_h^{(2)}$. This completes the proof of the theorem. 
\end{proof}

\section{Error Equations}\label{Section:error-equation}
The goal of this section is to derive the error equation for the weak Galerkin method (\ref{32}). The error equation plays a crucial role in the convergence analysis discussed in Section 6.

Let $\bQ_0$ and $\bQ_b$ be the $L^2$ projection operators onto $[P_k(T)]^d$ and $S_k(e)$ respectively. For $\bw\in H^1(\Omega)$, define the $L^2$ projection $\bQ_h \bw\in W_h$ as follows
$$
\bQ_h\bw|_T=\{\bQ_0\bw, \bQ_b\bw\}.
$$
Denote by ${\cal Q}_h$ the $L^2$ projection operator onto the finite element space of piecewise polynomials of degree $k-1$.

For simplicity, we assume the coefficients $\mu$ and $\lambda$ are piecewise constants with respect to the finite element partition ${\cal T}_h$. Note that the analysis can be extended to piecewise smooth coefficients $\mu$ and $\lambda$ without any difficulty.

\begin{lemma}\label{Lemma5.1} \cite{welasticity} The operators $\bQ_h$ and ${\cal Q}_h$ satisfy the following commutative properties:
\begin{eqnarray}\label{l}
\epsilon_{w} (\bQ_h \bw) &=& {\cal Q}_h (\epsilon(\bw)), \qquad  \forall   \bw\in [H^1(T)]^d,\\
\nabla_{w}\cdot (\bQ_h \bw) &=& {\cal Q}_h(\nabla \cdot \bw),  \qquad  \forall  \bw\in [H^1(T)]^d. \label{l-2}
\end{eqnarray}
\end{lemma}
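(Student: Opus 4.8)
The plan is to verify both commutative identities directly from the definitions of the discrete weak operators \eqref{disgradient}, \eqref{disstrain}, \eqref{divgence}, together with the defining properties of the $L^2$ projections $\bQ_0$, $\bQ_b$, and $\mathcal{Q}_h$, on a fixed element $T$. The key mechanism is ordinary integration by parts applied to the genuine (strong) operators $\epsilon(\bw)$ and $\nabla\cdot\bw$, which produces exactly the boundary terms that appear in the definitions of the weak operators, after which the projection errors are absorbed because the test functions are polynomials of degree $\le k-1$.

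First I would prove \eqref{l-2}, which is the simpler of the two. Fix $T$ and any test function $w\in P_{k-1}(T)$. By the definition \eqref{divgence} of $\nabla_w\cdot(\bQ_h\bw)$ applied to the weak function $\bQ_h\bw = \{\bQ_0\bw, \bQ_b\bw\}$, we have $(\nabla_w\cdot(\bQ_h\bw), w)_T = -(\bQ_0\bw, \nabla w)_T + \langle \bQ_b\bw, w\bn\rangle_{\partial T}$. Since $\nabla w \in [P_{k-2}(T)]^d \subset [P_k(T)]^d$, the property of $\bQ_0$ gives $(\bQ_0\bw,\nabla w)_T = (\bw,\nabla w)_T$; likewise $w\bn|_e$ restricted to each edge $e$ lies in the range of $\bQ_b$ (it belongs to $[P_{k-1}(e)]^d$, hence in $S_k(e)$), so $\langle\bQ_b\bw, w\bn\rangle_{\partial T} = \langle\bw, w\bn\rangle_{\partial T}$. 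Integration by parts then yields $-(\bw,\nabla w)_T + \langle\bw, w\bn\rangle_{\partial T} = (\nabla\cdot\bw, w)_T = (\mathcal{Q}_h(\nabla\cdot\bw), w)_T$, the last equality because $w\in P_{k-1}(T)$. Since this holds for all such $w$ and both $\nabla_w\cdot(\bQ_h\bw)$ and $\mathcal{Q}_h(\nabla\cdot\bw)$ lie in $P_{k-1}(T)$, they coincide. The proof of \eqref{l} is structurally identical: test against an arbitrary symmetric matrix $\bw\in [P_{k-1}(T)]^{d\times d}$ in \eqref{disstrain}, use that $\epsilon(\bw)$ is a matrix of polynomials of degree $\le k-2$ and that $\bw\bn|_e\in[P_{k-1}(e)]^d$ to pass the projections through, and invoke the integration-by-parts identity $-(\bv_0,\epsilon(\bw))_T + \langle \bv_0,\bw\bn\rangle_{\partial T} = (\epsilon(\bv_0),\bw)_T$ valid because $\bw$ is symmetric.

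The only point that requires a little care — and the natural place for the argument to go wrong — is the boundary term: one must check that on each edge $e\subset\partial T$ the polynomial $w\bn$ (resp.\ $\bw\bn$) indeed lies in $S_k(e) = [P_{k-1}(e)]^d + P_{RM}(e)$, so that replacing $\bw$ by $\bQ_b\bw$ on $\partial T$ introduces no error. For $k\ge 2$ this is immediate since $S_k(e)=[P_{k-1}(e)]^d$ and $\deg(w\bn|_e)\le k-2$. For $k=1$ one has $w\in P_0$, so $w\bn|_e$ is constant, hence lies in $P_{RM}(e)$, and the argument still goes through. I would note explicitly that the commutativity is exactly what forces the choice of the boundary space $S_k(e)$ in the definition of $W_k(T)$, and I would remark that since these identities are already established in \cite{welasticity} for this element, the above is essentially a recollection of that argument adapted to the present notation.
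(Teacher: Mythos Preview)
The paper does not supply its own proof of this lemma; it is stated with a citation to \cite{welasticity} and used without argument. Your proof is correct and is precisely the standard argument one finds in that reference: test against an arbitrary $P_{k-1}$ polynomial, use the defining properties of $\bQ_0$ and $\bQ_b$ to strip the projections (your edge-space check that $w\bn|_e$ and $\btau\bn|_e$ lie in $S_k(e)$, including the $k=1$ case, is exactly the point that needs care and is handled correctly), then integrate by parts and identify the result with $(\mathcal{Q}_h(\cdot),\cdot)_T$.
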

 
Let $\bu$ and $\bu_h \in  W_h$ be the exact solution of the elasticity  interface problem \eqref{model} and its numerical solution arising from the WG scheme (\ref{32}). Denote the error function by
\begin{equation}\label{error}
\be_h= {\cal Q}_h\bu-\bu_h.
\end{equation}

\begin{lemma}\label{errorequa}
Let $\bu$ and $\bu_h \in W_h$ be the exact solution of elasticity interface problem \eqref{model} and its numerical solution arising from the WG scheme (\ref{32}). The error function $\be_h$ defined in (\ref{error}) satisfies the following  equation:
\begin{equation}\label{sehv}
\begin{split}
&s(\be_h, \bv)+a(\be_h, \bv) =  s(\bQ_h\bu, \bv) \\&-\sum_{T\in {\cal T}_h} \langle \bv_0-\bv_b, 2\mu({\cal Q}_h(\epsilon(\bu))-\epsilon(\bu))\cdot\bn+\lambda ({\cal Q}_h (\nabla\cdot\bu)-\nabla\cdot\bu)\cdot\bn \rangle_{\pT}.
\end{split}
\end{equation}
\end{lemma}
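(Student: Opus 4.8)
The plan is to derive the error equation by testing the exact solution against an arbitrary $\bv = \{\bv_0, \bv_b\} \in W_h^0$ and comparing with the WG scheme \eqref{32}. First I would establish an ``integration-by-parts'' identity for the discrete weak strain and divergence applied to $\bQ_h\bu$. Using the commutative properties of Lemma~\ref{Lemma5.1}, namely $\epsilon_w(\bQ_h\bu) = \cQ_h(\epsilon(\bu))$ and $\nabla_w\cdot(\bQ_h\bu) = \cQ_h(\nabla\cdot\bu)$, together with the definitions \eqref{disgradient}--\eqref{disstrain}, I would rewrite $a(\bQ_h\bu, \bv)$ on each element $T$. The key manipulation is: for any symmetric polynomial matrix $\bw \in [P_{k-1}(T)]^{d\times d}$, \eqref{disstrain} gives $(\epsilon_w(\bv), \bw)_T = -(\bv_0, \epsilon(\bw))_T + \langle \bv_b, \bw\bn\rangle_{\pT}$; applying this with $\bw = 2\mu\,\cQ_h(\epsilon(\bu))$ (which is constant-coefficient times a polynomial, hence admissible since $\mu$ is piecewise constant) and then integrating by parts on the interior term to bring back $\epsilon(\bu_0)$-type quantities, I can connect to the strong form $-\nabla\cdot\bsigma(\bu) = \bf$.

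The central computation is then: testing the PDE $-\nabla\cdot\bsigma(\bu_i) = \bf_i$ against $\bv_0$ on each $T$, integrating by parts, yields $(\bsigma(\bu), \epsilon(\bv_0))_T - \langle \bsigma(\bu)\bn, \bv_0\rangle_{\pT} = (\bf, \bv_0)_T$. Summing over all $T$, the boundary terms $\langle \bsigma(\bu)\bn, \bv_b\rangle_{\pT}$ (which I insert and subtract) telescope: on interior edges of $\E_h^0$ away from $\Gamma$ the jump of $\bsigma(\bu)\bn$ vanishes by the regularity of $\bu$ on each subdomain and continuity of traction there, on $\partial\Omega$ the term $\bv_b$ vanishes, and on $\Gamma_m$ the contributions assemble to $\sum_m \langle \bpsi_m, \bv_b\rangle_{\Gamma_m}$ precisely by the jump condition $\ljump\bsigma(\bu)\bn\rjump_{\Gamma_m} = \bpsi_m$ (with the sign bookkeeping matching the convention in \eqref{model}). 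This reproduces the right-hand side of \eqref{32}. Then $a(\bQ_h\bu, \bv) = (\bf,\bv_0) + \sum_m\langle\bpsi,\bv_b\rangle_{\Gamma_m} + \sum_T \langle \bv_0 - \bv_b, 2\mu(\cQ_h(\epsilon(\bu)) - \epsilon(\bu))\cdot\bn + \lambda(\cQ_h(\nabla\cdot\bu) - \nabla\cdot\bu)\cdot\bn\rangle_{\pT}$, where the extra boundary term arises from replacing $\epsilon(\bu)$ by its projection inside the element integrals and from the mismatch between $\bv_0$ and $\bv_b$ on $\pT$.

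Finally I would subtract the WG scheme \eqref{32} from this identity. Adding and subtracting $s(\bQ_h\bu, \bv)$ on the left and using linearity, the difference $a(\bQ_h\bu - \bu_h, \bv) + s(\bQ_h\bu - \bu_h, \bv)$ equals $s(\bQ_h\bu, \bv)$ minus the boundary remainder term, which upon setting $\be_h = \cQ_h\bu - \bu_h$... here I note the slight notational subtlety that the error is defined with $\cQ_h\bu$ versus $\bQ_h\bu$; I would clarify that in the discrete norm the relevant error object is $\bQ_h\bu - \bu_h$ and the statement should be read accordingly (or that $\cQ_h$ in \eqref{error} is shorthand for $\bQ_h$ acting componentwise), then identify terms to arrive at \eqref{sehv}. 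The main obstacle I anticipate is the careful sign and orientation bookkeeping across the interface $\Gamma_m$: one must verify that the outward-normal conventions $\bn_i, \bn_j$ used in $\ljump\bsigma(\bu)\bn\rjump$ and the left/right convention $\bu_b^L - \bu_b^R$ combine so that the assembled interface integral is exactly $+\sum_m\langle\bpsi_m,\bv_b\rangle_{\Gamma_m}$ with no spurious sign, and that the $\bv_b$ term there is single-valued as required by the definition of $W_h$ off $\Gamma$ but genuinely double-valued on $\Gamma$ — this is where the structure of the finite element space $W_h$ (patching across $\E_h^0$ but \emph{not} across $\Gamma$) enters decisively.
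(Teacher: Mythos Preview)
Your approach is essentially the same as the paper's: both start from the commutative identities $\epsilon_w(\bQ_h\bu)={\cal Q}_h(\epsilon(\bu))$ and $\nabla_w\cdot(\bQ_h\bu)={\cal Q}_h(\nabla\cdot\bu)$, unwind the definitions \eqref{divgence} and \eqref{disstrain} with test function $\bv$, integrate by parts twice to recover $(\bf,\bv_0)$ and the interface contribution $\sum_m\langle\bpsi,\bv_b\rangle_{\Gamma_m}$, and then subtract the scheme \eqref{32}. One small slip: in your intermediate identity for $a(\bQ_h\bu,\bv)$ the boundary remainder should carry a minus sign (as the paper's computation shows), which is what makes your final ``minus the boundary remainder term'' consistent with \eqref{sehv}; and your observation about ${\cal Q}_h$ versus $\bQ_h$ in \eqref{error} is correct---it is a typo for $\bQ_h$.
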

 
\begin{proof}
Using \eqref{disstrain}, \eqref{l} and the usual integration by parts gives
 \begin{equation*}
 \begin{split}
 &\sum_{T\in {\cal T}_h}(2\mu\epsilon_w(\bQ_h\bu), \epsilon_w(\bv))_T\\
 =& \sum_{T\in {\cal T}_h}(2\mu {\cal Q}_h(\epsilon(\bu)), \epsilon_w(\bv))_T\\
 = &\sum_{T\in {\cal T}_h}-(2\mu \bv_0, \epsilon({\cal Q}_h(\epsilon(\bu))))_T+\langle 2\mu \bv_b, {\cal Q}_h(\epsilon(\bu))\cdot\bn \rangle_{\pT}
\\ =  &\sum_{T\in {\cal T}_h} (2\mu\epsilon(\bv_0),  {\cal Q}_h(\epsilon(\bu)))_T-\langle 2\mu(\bv_0-\bv_b), {\cal Q}_h(\epsilon(\bu))\cdot\bn \rangle_{\pT}
\\ =  &\sum_{T\in {\cal T}_h} (2\mu\epsilon(\bv_0),   \epsilon(\bu))_T-\langle 2\mu(\bv_0-\bv_b), {\cal Q}_h(\epsilon(\bu))\cdot\bn \rangle_{\pT} 
\\ =  &\sum_{T\in {\cal T}_h} (\bv_0, -\nabla\cdot(2\mu \epsilon(\bu)))_T-\langle 2\mu(\bv_0-\bv_b), ({\cal Q}_h(\epsilon(\bu))-\epsilon(\bu))\cdot\bn \rangle_{\pT}\\
  &\quad +\sum_{m=1}^M \langle \bv_b, \ljump2\mu\epsilon(\bu)\bn\rjump\rangle_{\Gamma_m},
\end{split}
\end{equation*}
where we used $\sum_{T\in {\cal T}_h} \langle \bv_b, \ljump 2\mu\epsilon(\bu)\bn\rjump\rangle_{\pT}=\sum_{m=1}^M \langle \bv_b, \ljump 2\mu\epsilon(\bu)\bn\rjump\rangle_{\Gamma_m} $ since $\bv_b=0$ on $\partial \Omega$ on the last line.
 
Using \eqref{divgence}, \eqref{l-2} and the usual integration by parts yields
 \begin{equation*}
 \begin{split}
&\sum_{T\in {\cal T}_h} (\lambda\nabla_w\cdot(\bQ_h\bu), \nabla_w\cdot\bv)_T\\
=& \sum_{T\in {\cal T}_h} (\lambda{\cal Q}_h(\nabla\cdot\bu),\nabla_w\cdot\bv)_T\\
=& \sum_{T\in {\cal T}_h} -(\bv_0, \nabla(\lambda{\cal Q}_h(\nabla\cdot\bu)))_T+\langle \bv_b, \lambda {\cal Q}_h (\nabla\cdot\bu)\bn\rangle_{\pT}\\
=& \sum_{T\in {\cal T}_h}  ( \nabla\cdot\bv_0, \lambda{\cal Q}_h(\nabla\cdot\bu))_T-\langle \bv_0-\bv_b, \lambda {\cal Q}_h (\nabla\cdot\bu)\bn\rangle_{\pT}
\\
=& \sum_{T\in {\cal T}_h}  ( \nabla\cdot\bv_0, \lambda \nabla\cdot\bu)_T-\langle \bv_0-\bv_b, \lambda {\cal Q}_h (\nabla\cdot\bu)\bn\rangle_{\pT}
\\
=& \sum_{T\in {\cal T}_h}  (\bv_0, -\nabla\cdot(\lambda \nabla\cdot\bu I))_T+\langle \bv_0-\bv_b, \lambda\nabla\cdot\bu\bn\rangle_{\pT}\\
   &\quad 
 -\langle \bv_0-\bv_b, \lambda {\cal Q}_h (\nabla\cdot\bu)\cdot\bn\rangle_{\pT}+\sum_{m=1}^M \langle \bv_b, \ljump\lambda\nabla\cdot\bu   \bn\rjump\rangle_{\Gamma_m}\\
=& \sum_{T\in {\cal T}_h}  (\bv_0, -\nabla\cdot(\lambda \nabla\cdot\bu I))_T -\langle \bv_0-\bv_b, \lambda ( {\cal Q}_h (\nabla\cdot\bu)-\nabla\cdot\bu)\bn\rangle_{\pT}\\
 & \quad +\sum_{m=1}^M \langle \bv_b, \ljump\lambda\nabla\cdot\bu   \bn\rjump\rangle_{\Gamma_m},
\end{split}
\end{equation*}
where we used $\sum_{T\in {\cal T}_h}\langle \bv_b, \ljump\lambda\nabla\cdot\bu  \bn\rjump\rangle_{\pT}=\sum_{m=1}^M \langle \bv_b, \ljump\lambda\nabla\cdot\bu   \bn\rjump\rangle_{\Gamma_m}$ since $\bv_b=0$ on $\partial \Omega$.
 
 Adding the above two equations gives rise to
 \begin{equation*}
 \begin{split}
 &s(\bQ_h\bu, \bv)+\sum_{T\in {\cal T}_h}(2\mu\epsilon_w(\bQ_h\bu), \epsilon_w(\bv))_T+(\lambda\nabla_w\cdot(\bQ_h\bu), \nabla_w\cdot\bv)_T\\
  =&s(\bQ_h\bu, \bv)+\sum_{T\in {\cal T}_h} (\bv_0, -\nabla\cdot(2\mu \epsilon(\bu)))_T
  \\&\quad -\langle 2\mu(\bv_0-\bv_b), ({\cal Q}_h(\epsilon(\bu))-\epsilon(\bu)) \bn \rangle_{\pT} 
    + (\bv_0, -\nabla\cdot(\lambda \nabla\cdot\bu I))_T 
  \\&\quad  -\langle \bv_0-\bv_b, \lambda ({\cal Q}_h (\nabla\cdot\bu)-\nabla\cdot\bu) \bn\rangle_{\pT}+\sum_{m=1}^M\langle \bpsi, \bv_b\rangle_{\Gamma_m}\\
  =& s(\bQ_h\bu, \bv)+(\bf, \bv_0)
  +\sum_{m=1}^M\langle \bpsi, \bv_b\rangle_{\Gamma_m}
  \\&\quad  -\sum_{T\in {\cal T}_h} \langle  (\bv_0-\bv_b), 2\mu({\cal Q}_h(\epsilon(\bu))-\epsilon(\bu)) \bn+\lambda ({\cal Q}_h (\nabla\cdot\bu)
   -\nabla\cdot\bu) \bn \rangle_{\pT}.
  \end{split}
 \end{equation*}
Subtracting \eqref{32} from the above equation yields
\eqref{sehv}.
 \end{proof}

The equation (\ref{sehv}) is called the {\em error
equation} for the WG finite element scheme
(\ref{32}).

\section{Error Estimates}\label{Section:error-estimates}
Note that $\T_h$ is a shape-regular finite element partition of
the domain $\Omega$. For any $T\in\T_h$ and $\varphi\in H^{1}(T)$, the following trace inequality holds true \cite{wy3655}:
\begin{equation}\label{trace-inequality}
\|\varphi\|_{\pT}^2 \leq C
(h_T^{-1}\|\varphi\|_{T}^2+h_T \| \varphi\|_{1, T}^2).
\end{equation}
If $\varphi$ is a polynomial, using the inverse inequality, we have
\begin{equation}\label{trace}
\|\varphi\|_{\pT}^2 \leq Ch_T^{-1}\|\varphi\|_{T}^2.
\end{equation}

\begin{lemma}\label{Lemma5.2}\cite{wy3655} Let ${\cal T}_h$ be a
finite element partition of $\Omega$ satisfying the shape regularity
assumptions as specified in \cite{wy3655}.
The following estimates hold true
\begin{eqnarray}\label{3.2}
\sum_{T\in {\cal T}_h}h_T^{2l}\|\bu-{\cal Q}_h\bu\|^2_{l,T} &\leq &
Ch^{2m}\|\bu\|_{m}^2,\\
\sum_{T\in {\cal T}_h} h_T^{2l}\|\bu- \bQ_0 \bu\|^2_{l,T} &\leq& Ch^{2(m+1)}\|\bu\|_{m+1}^2, \label{3.1}\\
\sum_{T\in {\cal T}_h}h_T^{2l}\|\bu-\bQ_b\bu\|^2_{l,T} &\leq &
Ch^{2m}\|\bu\|_{m}^2, \label{3.3}
\end{eqnarray}
where $0\leq l \leq 2$ and $0\leq m\leq k$.
\end{lemma}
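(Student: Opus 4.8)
The plan is to establish the three estimates in Lemma~\ref{Lemma5.2} by a standard reduction to the scaling/Bramble--Hilbert argument on a reference element, combined with the shape-regularity of $\T_h$. The estimates are purely approximation-theoretic and do not involve the weak Galerkin structure, so the weak differential operators play no role here.

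\emph{Step 1: reduce to a single element.} Since all three sums are over $T\in\T_h$, it suffices to prove the corresponding elementwise bounds
$$
\|\bu-{\cal Q}_h\bu\|_{l,T}^2\le Ch_T^{2(m-l)}\|\bu\|_{m,T}^2,\qquad
\|\bu-\bQ_0\bu\|_{l,T}^2\le Ch_T^{2(m+1-l)}\|\bu\|_{m+1,T}^2,
$$
and the analogous trace estimate $\|\bu-\bQ_b\bu\|_{l,\partial T}^2\le Ch_T^{2m-2l-1}\|\bu\|_{m,T}^2$ (the exponent of $h_T$ accounting for the boundary being $(d-1)$-dimensional and for the $h_T^{2l}$ weight placed on it), and then sum over $T$ using $h_T\le h$ together with $0\le m-l$ (resp.\ $0\le m+1-l$) so that $h_T^{2(m-l)}\le h^{2(m-l)}$; finally $\sum_T\|\bu\|_{m,T}^2=\|\bu\|_m^2$. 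Because ${\cal Q}_h$, $\bQ_0$, $\bQ_b$ are defined elementwise, $({\cal Q}_h\bu)|_T$ depends only on $\bu|_T$, so the reduction is clean.

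\emph{Step 2: scaling and Bramble--Hilbert on each element.} Fix $T$ and map it to a reference configuration $\hat T$ of unit size via an affine (or, for general polygons, a shape-regularity--controlled) change of variables; shape-regularity of $\T_h$ as specified in \cite{wy3655} guarantees that the Jacobians and their inverses are bounded in terms of $h_T$ with constants depending only on the regularity parameter, which is exactly what converts derivative norms on $T$ into derivative norms on $\hat T$ with the powers of $h_T$ displayed above. On $\hat T$, since ${\cal Q}_h$ reproduces polynomials of degree $k-1\ge m-1$ (and $\bQ_0$ reproduces degree $k\ge m$), the operators $I-{\cal Q}_h$ and $I-\bQ_0$ annihilate polynomials of the appropriate degree; the Bramble--Hilbert lemma then gives $\|\hat\bu-\hat{\cal Q}_h\hat\bu\|_{l,\hat T}\le C|\hat\bu|_{m,\hat T}$ for $l\le m\le k$, and similarly for $\bQ_0$ with $m+1$ in place of $m$. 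For the trace estimate \eqref{3.3} one additionally invokes the trace inequality \eqref{trace-inequality} on $\hat T$ to pass from the boundary norm to a full $H^m$ norm on $\hat T$. Scaling back reinstates the powers of $h_T$, yielding the elementwise bounds of Step~1.

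\emph{Main obstacle.} The one genuinely non-routine point is that $\T_h$ consists of general polygons/polyhedra rather than simplices, so there is no single affine reference element; the argument must instead use the shape-regularity framework of \cite{wy3655} (chunkiness parameter, uniform interior-cone / star-shapedness conditions) under which the Bramble--Hilbert and scaling arguments remain valid with constants depending only on the regularity parameter. Since the lemma explicitly cites \cite{wy3655} and we are permitted to assume results stated earlier, I would simply quote that these polygonal scaling estimates hold there, and note that the trace-related factor of $h_T^{-1}$ in \eqref{3.3} comes from \eqref{trace-inequality}. The only care needed is bookkeeping of the exponents: verifying that $0\le l\le 2$ and $0\le m\le k$ is exactly the range for which $I-{\cal Q}_h$ (degree $k-1$) kills the right polynomials and the weighted sum telescopes correctly.
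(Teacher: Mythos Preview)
The paper does not actually prove Lemma~\ref{Lemma5.2}; it is simply quoted from \cite{wy3655} with no argument supplied. Your sketch is the standard scaling/Bramble--Hilbert route that underlies such results, and it is essentially correct: reduce to a single element, use shape-regularity to pass to a unit-size configuration, apply Bramble--Hilbert together with the polynomial-reproduction property of the $L^2$ projections, and scale back. The only non-routine point---that the argument goes through on general shape-regular polygons/polyhedra rather than on an affine family of simplices---you correctly flag and correctly delegate to \cite{wy3655}.

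Two small bookkeeping remarks. First, $\bQ_b$ is the $L^2$ projection onto the edge/face space $S_k(e)$, so $\bQ_b\bu$ lives on $\partial T$; the $\|\cdot\|_{l,T}$ in \eqref{3.3} should really be read as a boundary norm, which is how you treat it. Second, the ranges $0\le l\le 2$ and $0\le m\le k$ as literally stated are too loose (e.g.\ $m=0$, $l=1$ is nonsensical); the estimates only hold under the natural additional constraint $l\le m$ (resp.\ $l\le m+1$), which you implicitly impose when you use $h_T^{2(m-l)}\le h^{2(m-l)}$. Neither point affects the validity of your argument.
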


In the weak finite element space $W_h$, we introduce the following semi-norm
$$
\3bar\bv\3bar=\Big(a(\bv, \bv)+s(\bv, \bv)\Big)^{\frac{1}{2}}.
$$

\begin{lemma}
There exists a constant $C$ such that
\begin{equation}\label{eberr}
\Big(\sum_{T\in {\cal T}_h}h_T^{-1}\|\be_0-\be_b\|_{\pT}^2
\Big)^{\frac{1}{2}}\leq C\3bar\be_h\3bar.
\end{equation}
\end{lemma}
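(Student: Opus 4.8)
The goal is to bound the jump seminorm $\big(\sum_{T}h_T^{-1}\|\be_0-\be_b\|_{\pT}^2\big)^{1/2}$ by $\3bar\be_h\3bar$. The stabilizer term in $\3bar\be_h\3bar^2$ only controls $h_T^{-1}\|\bQ_b\be_0-\be_b\|_{\pT}^2$, so the plan is to insert $\bQ_b\be_0$ as an intermediate quantity and split
$$
\|\be_0-\be_b\|_{\pT}\leq \|\be_0-\bQ_b\be_0\|_{\pT}+\|\bQ_b\be_0-\be_b\|_{\pT}.
$$
The second term, after multiplying by $h_T^{-1}$ and summing, is exactly $s(\be_h,\be_h)\leq \3bar\be_h\3bar^2$, so it is done. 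The work is therefore entirely in controlling the first term $\sum_{T}h_T^{-1}\|\be_0-\bQ_b\be_0\|_{\pT}^2$ by $\3bar\be_h\3bar^2$.

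First I would handle $\be_0-\bQ_b\be_0$ on each edge/face. Since $\bQ_b$ is the $L^2$ projection onto $S_k(e)$, which contains $[P_{k-1}(e)]^d$ (and for $k=1$ contains $P_{RM}(e)$), the quantity $\be_0-\bQ_b\be_0$ measures how far the polynomial $\be_0\in[P_k(T)]^d$ is from its boundary projection. The natural route is: use the trace inequality \eqref{trace} for polynomials to pass from $\partial T$ to $T$, then invoke a standard scaling/approximation estimate on the reference element to get $\|\be_0-\bQ_b\be_0\|_{\pT}^2\leq C h_T\|\nabla\be_0\|_T^2$, or more precisely to relate it to $\|\nabla_w\be_h\|_T$. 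Concretely, I would write $\nabla\be_0=\nabla_w\be_h+(\text{terms involving }\bQ_b\be_0-\be_b)$ using the integration-by-parts identity already derived in the uniqueness proof, namely $(\nabla_w\be_h,\bq)_T=(\nabla\be_0,\bq)_T-\langle\bQ_b\be_0-\be_b,\bq\bn\rangle_{\pT}$ for all $\bq\in[P_{k-1}(T)]^{d\times d}$. Taking $\bq=\nabla\be_0$ (which lies in $[P_{k-1}(T)]^{d\times d}$) yields
$$
\|\nabla\be_0\|_T^2=(\nabla_w\be_h,\nabla\be_0)_T+\langle\bQ_b\be_0-\be_b,\nabla\be_0\cdot\bn\rangle_{\pT},
$$
and then the trace inequality \eqref{trace} plus Cauchy--Schwarz and Young's inequality gives $\|\nabla\be_0\|_T^2\leq C\big(\|\nabla_w\be_h\|_T^2+h_T^{-1}\|\bQ_b\be_0-\be_b\|_{\pT}^2\big)$. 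Combining with $\sum_T h_T^{-1}\|\be_0-\bQ_b\be_0\|_{\pT}^2\leq C\sum_T\|\nabla\be_0\|_T^2$ (from the trace and approximation estimates) finishes the first term.

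The remaining subtlety is that $\3bar\be_h\3bar^2=a(\be_h,\be_h)+s(\be_h,\be_h)$ controls $\|2\mu\,\epsilon_w(\be_h)\|^2$ and $\|\lambda\,\nabla_w\cdot\be_h\|^2$, \emph{not} the full $\|\nabla_w\be_h\|^2$. So I need a discrete Korn-type inequality at the element level — or rather, I need to bound $\|\nabla\be_0\|_T$ (equivalently $\|\nabla_w\be_h\|_T$ up to the stabilizer) by $\|\epsilon_w(\be_h)\|_T$ plus the stabilizer. Since $\be_0\in[P_k(T)]^d$ is a polynomial on a shape-regular element, one can use the fact that on a fixed reference configuration $\|\nabla\bp\|$ is controlled by $\|\epsilon(\bp)\|$ modulo rigid motions, together with the control the stabilizer gives on the rigid-motion component through the boundary term. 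Alternatively, and more cleanly, this lemma may only genuinely need $\nabla_w\be_h$ rather than $\epsilon_w$: re-reading the statement, $\3bar\cdot\3bar$ as defined uses $a(\cdot,\cdot)$ which involves $\epsilon_w$, so the Korn step is unavoidable. \textbf{I expect this discrete Korn inequality — controlling $\|\nabla_w\be_h\|$ by $\|\epsilon_w(\be_h)\|+s(\be_h,\be_h)^{1/2}$ element-by-element (or globally, using the second Korn inequality \eqref{k} on the piecewise-smooth reconstruction) — to be the main obstacle}, since it requires a careful argument handling the skew-symmetric (rotation) part that $\epsilon_w$ annihilates; everything else is routine trace, inverse, and approximation estimates.
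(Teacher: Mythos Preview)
Your approach is essentially identical to the paper's: split via the triangle inequality with the intermediate $\bQ_b\be_0$, handle the second piece by the stabilizer, and reduce the first piece to $\sum_T|\be_0|_{1,T}^2$ via the trace inequality \eqref{trace} and the approximation property \eqref{3.3} of $\bQ_b$.

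The one place you differ is in honesty about the last step. The paper's proof is four lines and ends by simply asserting
\[
\Big(\sum_{T}|\be_0|_{1,T}^2+h_T^{-1}\|\bQ_b\be_0-\be_b\|_{\pT}^2\Big)^{1/2}\leq C\,\3bar\be_h\3bar,
\]
with no further justification. Your intermediate calculation relating $\nabla\be_0$ to $\nabla_w\be_h$ plus the stabilizer term is correct and useful, and your identification of the discrete Korn inequality---controlling $\|\nabla_w\be_h\|$ (equivalently $|\be_0|_{1,T}$) by $\|\epsilon_w(\be_h)\|$ and $s(\be_h,\be_h)^{1/2}$---as the genuine issue is exactly right. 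The paper does not spell this out; it is implicitly invoking the norm equivalence established in the locking-free elasticity paper \cite{welasticity} (where $\3bar\cdot\3bar$ is shown to dominate the piecewise $H^1$ seminorm for functions in $W_h^0$, using the second Korn inequality on a suitable reconstruction together with the fact that $S_k(e)\supset P_{RM}(e)$). So there is no gap in your plan---you have simply made explicit the step the paper takes for granted.
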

\begin{proof}
Using the triangle inequality, the error estimate \eqref{3.3} for the projection $\bQ_b$ and the trace inequality \eqref{trace}, we have
\begin{equation}
\begin{split}
&\Big(\sum_{T\in {\cal T}_h}h_T^{-1}\|\be_0-\be_b\|_{\pT}^2\Big)^{\frac{1}{2}}\\
\leq &\Big(\sum_{T\in {\cal T}_h}h_T^{-1} \|\be_0-\bQ_b\be_0\|_{\pT}^2+h_T^{-1} \|\bQ_b\be_0-\be_b\|_{\pT}^2\Big)^{\frac{1}{2}}\\\leq &\Big(\sum_{T\in {\cal T}_h}h_T^{-2} \|\be_0-\bQ_b\be_0\|_{T}^2+h_T^{-1} \|\bQ_b\be_0-\be_b\|_{\pT}^2\Big)^{\frac{1}{2}}\\
\leq &\Big(\sum_{T\in {\cal T}_h}  |\be_0|_{1,T}^2+h_T^{-1} \|\bQ_b\be_0-\be_b\|_{\pT}^2\Big)^{\frac{1}{2}}\\
\leq& C\3bar \be_h\3bar.
\end{split}
\end{equation}

\end{proof}

The main convergence result can be stated as follows.
\begin{theorem} \label{theoestimate}
Assume $k\geq 1$ and the coefficient coefficients $\mu$ and $\lambda$ are piecewise constants with respect to the finite element partition ${\cal T}_h$. Let $\bu$ and $\bu_h \in W_h$ be the exact solution of the elasticity interface problem \eqref{model} and its numerical solution arising from the WG scheme (\ref{32}). Assume that $\bu$ is sufficiently regular such that $u\in \prod_{i=1}^N  [H^{k+1}(\Omega_i)]^d$. The following error estimate holds true:
 \begin{equation}\label{erres}
\3bar \be_h \3bar \leq Ch^{k}\left( \sum_{i=1}^N \|\bu\|^2_{k+1, \Omega_i} \right)^{\frac{1}{2}}.
\end{equation}
\end{theorem}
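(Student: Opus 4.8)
The plan is to estimate the error $\3bar\be_h\3bar$ by taking $\bv = \be_h$ in the error equation \eqref{sehv} of Lemma \ref{errorequa}, so that the left-hand side becomes $\3bar\be_h\3bar^2$, and then to bound the three groups of terms on the right-hand side: the stabilizer term $s(\bQ_h\bu, \be_h)$, and the two boundary terms involving $2\mu({\cal Q}_h(\epsilon(\bu))-\epsilon(\bu))\cdot\bn$ and $\lambda({\cal Q}_h(\nabla\cdot\bu)-\nabla\cdot\bu)\cdot\bn$. Each will be shown to be bounded by $Ch^k\big(\sum_i\|\bu\|_{k+1,\Omega_i}^2\big)^{1/2}\,\3bar\be_h\3bar$, after which dividing through by $\3bar\be_h\3bar$ gives \eqref{erres}.

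First I would handle the stabilizer term. By the definition of $s(\cdot,\cdot)$ and Cauchy--Schwarz on each $\partial T$,
\begin{equation*}
|s(\bQ_h\bu,\be_h)| \leq \Big(\sum_{T\in{\cal T}_h}h_T^{-1}\|\bQ_b\bQ_0\bu-\bQ_b\bu\|_{\pT}^2\Big)^{1/2}\Big(\sum_{T\in{\cal T}_h}h_T^{-1}\|\bQ_b\be_0-\be_b\|_{\pT}^2\Big)^{1/2}.
\end{equation*}
The second factor is bounded by $C\3bar\be_h\3bar$ directly from the definition of $\3bar\cdot\3bar$ (it is literally part of $s(\be_h,\be_h)$). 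For the first factor, since $\bQ_b$ is an $L^2$-bounded projection, $\|\bQ_b\bQ_0\bu - \bQ_b\bu\|_{\pT} \leq \|\bQ_0\bu-\bu\|_{\pT}$; then the trace inequality \eqref{trace-inequality} applied to $\bQ_0\bu-\bu$ together with the approximation estimates \eqref{3.1} (for $l=0$ and $l=1$) yields $\sum_T h_T^{-1}\|\bQ_0\bu-\bu\|_{\pT}^2 \leq Ch^{2k}\|\bu\|_{k+1}^2$, summed subdomain-by-subdomain to get $Ch^{2k}\sum_i\|\bu\|_{k+1,\Omega_i}^2$. For the two consistency boundary terms, I would first replace $\bv_0-\bv_b = \be_0-\be_b$ and insert $\bQ_b\be_0$: write $\be_0-\be_b = (\be_0-\bQ_b\be_0) + (\bQ_b\be_0-\be_b)$. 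Against the factor $({\cal Q}_h(\epsilon(\bu))-\epsilon(\bu))\cdot\bn$ one applies Cauchy--Schwarz on $\partial T$; the $\be$-side is controlled by $\big(\sum_T h_T^{-1}\|\be_0-\be_b\|_{\pT}^2\big)^{1/2} \leq C\3bar\be_h\3bar$ using Lemma stated as \eqref{eberr}, and the data side by $\big(\sum_T h_T\|{\cal Q}_h(\epsilon(\bu))-\epsilon(\bu)\|_{\pT}^2\big)^{1/2}$. That last quantity is estimated with the trace inequality \eqref{trace-inequality} applied to $\epsilon(\bu)-{\cal Q}_h(\epsilon(\bu))$ and the approximation bound \eqref{3.2} (with $m=k$, noting $\epsilon(\bu)\in \prod_i[H^k(\Omega_i)]^{d\times d}$), giving $Ch^k\big(\sum_i\|\bu\|_{k+1,\Omega_i}^2\big)^{1/2}$; the $\lambda\,{\cal Q}_h(\nabla\cdot\bu)$ term is identical with $\nabla\cdot\bu$ in place of $\epsilon(\bu)$. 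Because $\mu$ and $\lambda$ are piecewise constant and $\lambda/\mu$ is bounded, these coefficients contribute only a harmless constant; the traces are taken from inside each element so the interface jump causes no trouble.

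The main obstacle I anticipate is bookkeeping rather than conceptual: one must be careful that all trace and approximation estimates are applied \emph{per subdomain} $\Omega_i$ (since $\bu$ is only piecewise $H^{k+1}$ across $\Gamma$), that the element traces in the boundary terms are one-sided so the interface plays no role, and that the jump terms $\sum_m\langle\bpsi,\bv_b\rangle_{\Gamma_m}$ and $\sum_m\langle\bQ_b\bphi_m,\cdot\rangle$ have already been absorbed exactly in deriving \eqref{sehv} and \eqref{32}, so they do not reappear. I would also double-check the normalization constant in $s_T$ ($h_T^{-1}$ matching the $h_T^{-1}$ weights above) so that the $\3bar\cdot\3bar$ absorption is clean. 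Once the three right-hand terms are each bounded by $Ch^k\big(\sum_i\|\bu\|_{k+1,\Omega_i}^2\big)^{1/2}\3bar\be_h\3bar$, the estimate \eqref{erres} follows immediately.
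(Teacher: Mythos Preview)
Your proposal is correct and follows essentially the same route as the paper: take $\bv=\be_h$ in the error equation \eqref{sehv}, bound $s(\bQ_h\bu,\be_h)$ via Cauchy--Schwarz, the $L^2$-boundedness of $\bQ_b$, the trace inequality \eqref{trace-inequality} and the approximation estimate \eqref{3.1}, and bound the consistency boundary term via \eqref{eberr}, the trace inequality, and \eqref{3.2}. Your handling of the $h_T^{-1}$ weight in the stabilizer is in fact cleaner than the paper's display (which appears to drop that factor in the first line of \eqref{t_2} and then obtains an extraneous extra power of $h$); your $Ch^k$ bound for that term is the correct one and is all that is needed.
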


\begin{proof} Letting $\bv=\be_h$ in (\ref{sehv}) gives rise to
\begin{equation}\label{t_1}
\begin{split}
&s(\be_h, \be_h)+a(\be_h, \be_h)=s(\bQ_h\bu, \be_h) \\&-\sum_{T\in {\cal T}_h} \langle  \be_0-\be_b, 2\mu({\cal Q}_h(\epsilon(\bu))-\epsilon(\bu))\cdot\bn+\lambda ({\cal Q}_h (\nabla\cdot\bu)-\nabla\cdot\bu)\cdot\bn \rangle_{\pT}.
\end{split}
\end{equation} 
Using the Cauchy-Schwarz inequality, the trace inequality
\eqref{trace-inequality} and the estimate \eqref{3.1}, we have
\begin{equation}\label{t_2}
\begin{split}
&s(\bQ_h\bu, \be_h)\\&=\sum_{T\in {\cal T}_h} \langle \bQ_b(\bQ_0\bu)-\bQ_b\bu, \bQ_b\be_0-\be_b\rangle_{\pT}\\
&\leq \Big(\sum_{T\in {\cal T}_h} h_T\|\bQ_b(\bQ_0\bu)-\bQ_b\bu\|_{\pT}^2\Big)^{\frac{1}{2}} \Big(\sum_{T\in {\cal T}_h}h_T^{-1}\|\bQ_b\be_0-\be_b\|_{\pT}^2\Big)^{\frac{1}{2}}\\
&\leq   \Big(\sum_{T\in {\cal T}_h} h_T\|\bQ_0\bu-\bu\|_{\pT}^2\Big)^{\frac{1}{2}} \Big(\sum_{T\in {\cal T}_h}h_T^{-1}\|\bQ_b\be_0-\be_b\|_{\pT}^2\Big)^{\frac{1}{2}}\\
&\leq   \Big(\sum_{T\in {\cal T}_h} \|\bQ_0\bu-\bu\|_{T}^2+  h_T^2\|\bQ_0\bu-\bu\|_{1, T}\Big)^{\frac{1}{2}} \\ &\quad \cdot
   \Big(\sum_{T\in {\cal T}_h}h_T^{-1}\|\bQ_b\be_0-\be_b\|_{\pT}^2\Big)^{\frac{1}{2}}\\
&\leq Ch^{k+1}\left( \sum_{i=1}^N \|\bu\|^2_{k+1, \Omega_i} \right)^{\frac{1}{2}}\3bar \be_h\3bar.
\end{split}
\end{equation}
Using the Cauchy-Schwarz inequality, the trace inequality \eqref{trace-inequality}, \eqref{eberr} and the estimate \eqref{3.2}, we have
\begin{equation}\label{t_3}
\begin{split}
& \sum_{T\in {\cal T}_h} \langle \be_0-\be_b, 2\mu({\cal Q}_h(\epsilon(\bu))-\epsilon(\bu))\cdot\bn+\lambda ({\cal Q}_h (\nabla\cdot\bu)-\nabla\cdot\bu)\cdot\bn \rangle_{\pT}\\
\leq&\sum_{T\in {\cal T}_h} \langle \be_0-\be_b, 2\mu({\cal Q}_h(\epsilon(\bu))-\epsilon(\bu))\cdot\bn+\lambda ({\cal Q}_h (\nabla\cdot\bu)-\nabla\cdot\bu)\cdot\bn \rangle_{\pT}\\
\leq& \Big(\sum_{T\in {\cal T}_h}h_T^{-1}\|\be_0-\be_b\|_{\pT}^2\Big)^{\frac{1}{2}}
   \\&\quad \cdot
  \Big(\sum_{T\in {\cal T}_h}h_T\|2\mu({\cal Q}_h(\epsilon(\bu))-\epsilon(\bu))\cdot\bn+\lambda ({\cal Q}_h (\nabla\cdot\bu)-\nabla\cdot\bu)\cdot\bn\|_{\pT}^2\Big)^{\frac{1}{2}}\\
\leq& \Big(\sum_{T\in {\cal T}_h}h_T^{-1}\|\be_0-\be_b\|_{\pT}^2\Big)^{\frac{1}{2}} 
  \cdot \Big\{\Big(\sum_{T\in {\cal T}_h}\|2\mu({\cal Q}_h(\epsilon(\bu))-\epsilon(\bu))\cdot\bn\|_{T}^2   \\& +h_T^2\|2\mu({\cal Q}_h(\epsilon(\bu))-\epsilon(\bu))\cdot\bn\|_{1,T}^2\Big)^{\frac{1}{2}} 
  +\Big(\sum_{T\in {\cal T}_h} \|\lambda ({\cal Q}_h (\nabla\cdot\bu)-\nabla\cdot\bu)\cdot\bn\|_{T}^2\\&  +h_T^2\|\lambda ({\cal Q}_h (\nabla\cdot\bu)-\nabla\cdot\bu)\cdot\bn\|_{1,T}^2\Big)^{\frac{1}{2}}\Big\}\\
\leq & Ch^k\3bar\be_h\3bar\left( \sum_{i=1}^N \|\bu\|^2_{k+1, \Omega_i} \right)^{\frac{1}{2}}.
\end{split}
\end{equation}
Substituting \eqref{t_2}-\eqref{t_3} into \eqref{t_1} completes the proof of the theorem. 
\end{proof}

\section{Numerical Experiments}\label{Section:numerical-experiments}
\def\ad#1{\begin{aligned}#1\end{aligned}}  \def\b#1{\mathbf{#1}} \def\hb#1{\hat{\mathbf{#1}}}
\def\a#1{\begin{align*}#1\end{align*}} \def\an#1{\begin{align}#1\end{align}} \def\t#1{\hbox{#1}}
\def\e#1{\begin{equation}#1\end{equation}} \def\div{\operatorname{div}}
\def\p#1{\begin{pmatrix}#1\end{pmatrix}} \def\t#1{\text{#1}}

In this section, we shall report some numerical results for the weak Galerkin finite element scheme \eqref{32} proposed and analyzed for elasticity interface problem \eqref{model} in the previous sections.

In all three test examples, we choose the domain  $\Omega=(0,1)^2$.
 We solve the following linear elasticity interface problem: Find $\bu \in H^1_0(\Omega)^2$
    such that
  \an{\label{w-f} (2\mu\bepsilon(\bu), \bepsilon(\bv)) +(\lambda 
   \nabla \cdot \bu , \nabla \cdot \bv) &=(\bf,\bv) \quad \forall \bv \in H^1_0(\Omega)^2.
  }  The coefficients are set as, for various positive constants $\mu_0$ and $\lambda_0$, 
\an{\label{c-2} \ad{ \mu(x,y)&= \begin{cases} \frac{\mu_0}{2} &(x,y)\in(\frac 14,\frac 34)^2, \\
                                              \frac{1}{2}     & \t{elsewhere,} \end{cases} \\
               \lambda (x,y)&= \lambda_0 \mu(x,y). } }

  \begin{figure}[htb]\begin{center}
\includegraphics[width=2.3in]{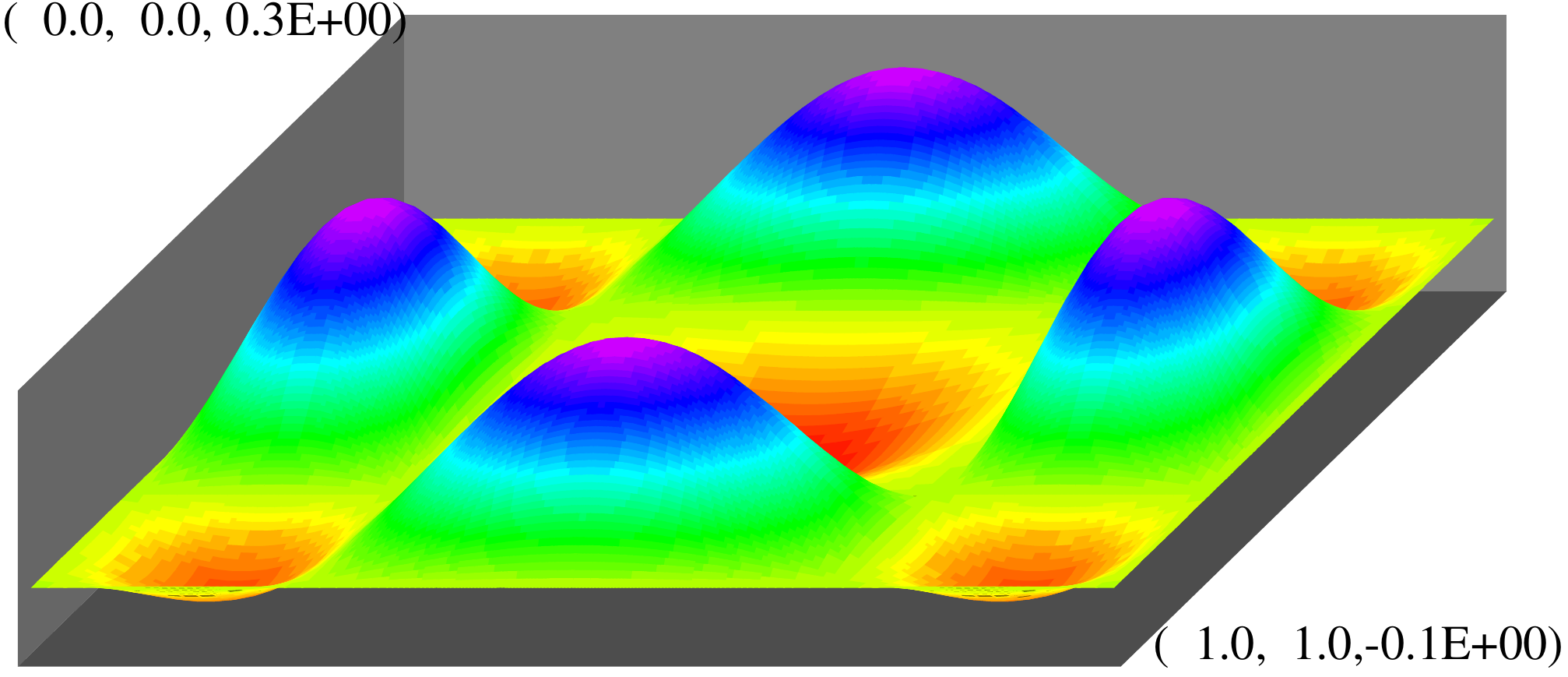} \ 
\includegraphics[width=2.3in]{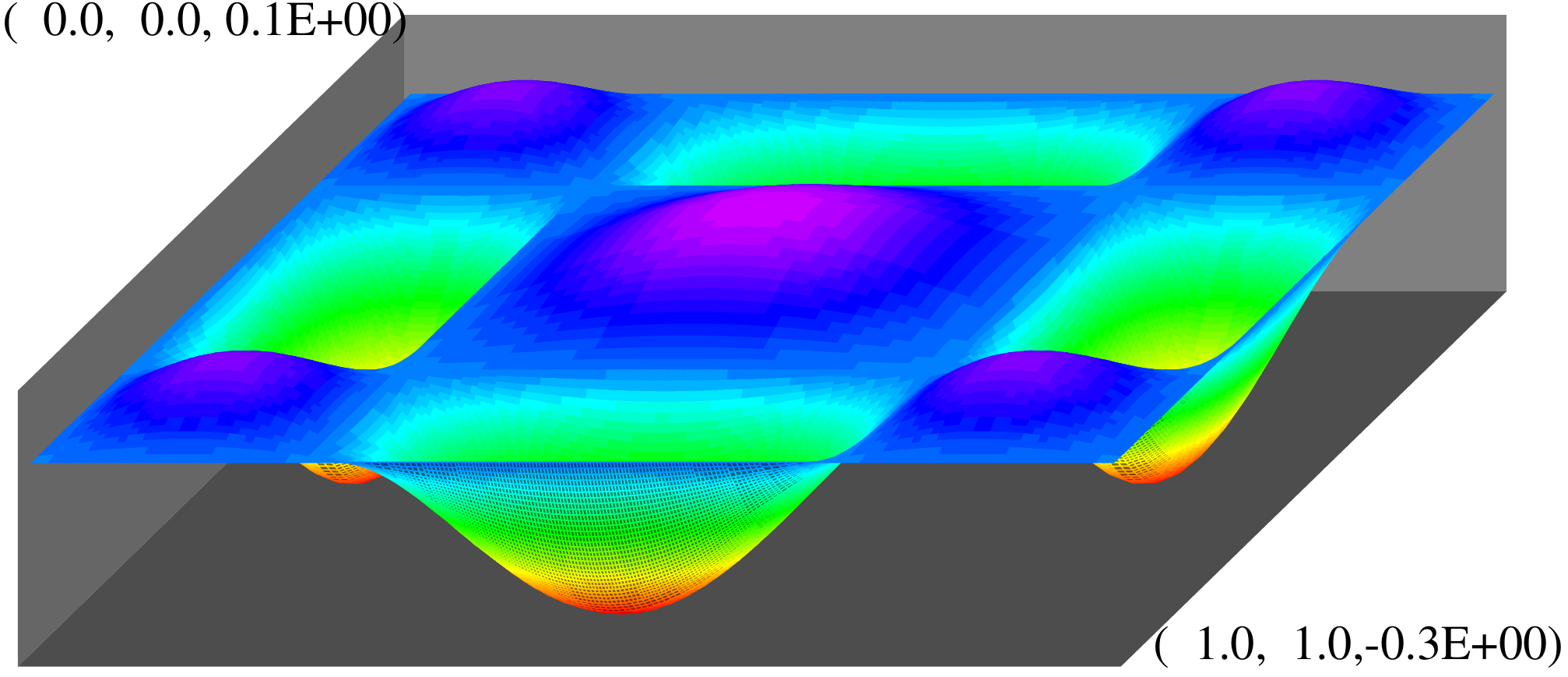}
\caption{The exact solution $\bu$  with $\mu_0=10$ and $\lambda_0=10$.  }
\label{g-s-1}
\end{center}
\end{figure}

\begin{figure}[htb]\begin{center}
\includegraphics[width=1.2in]{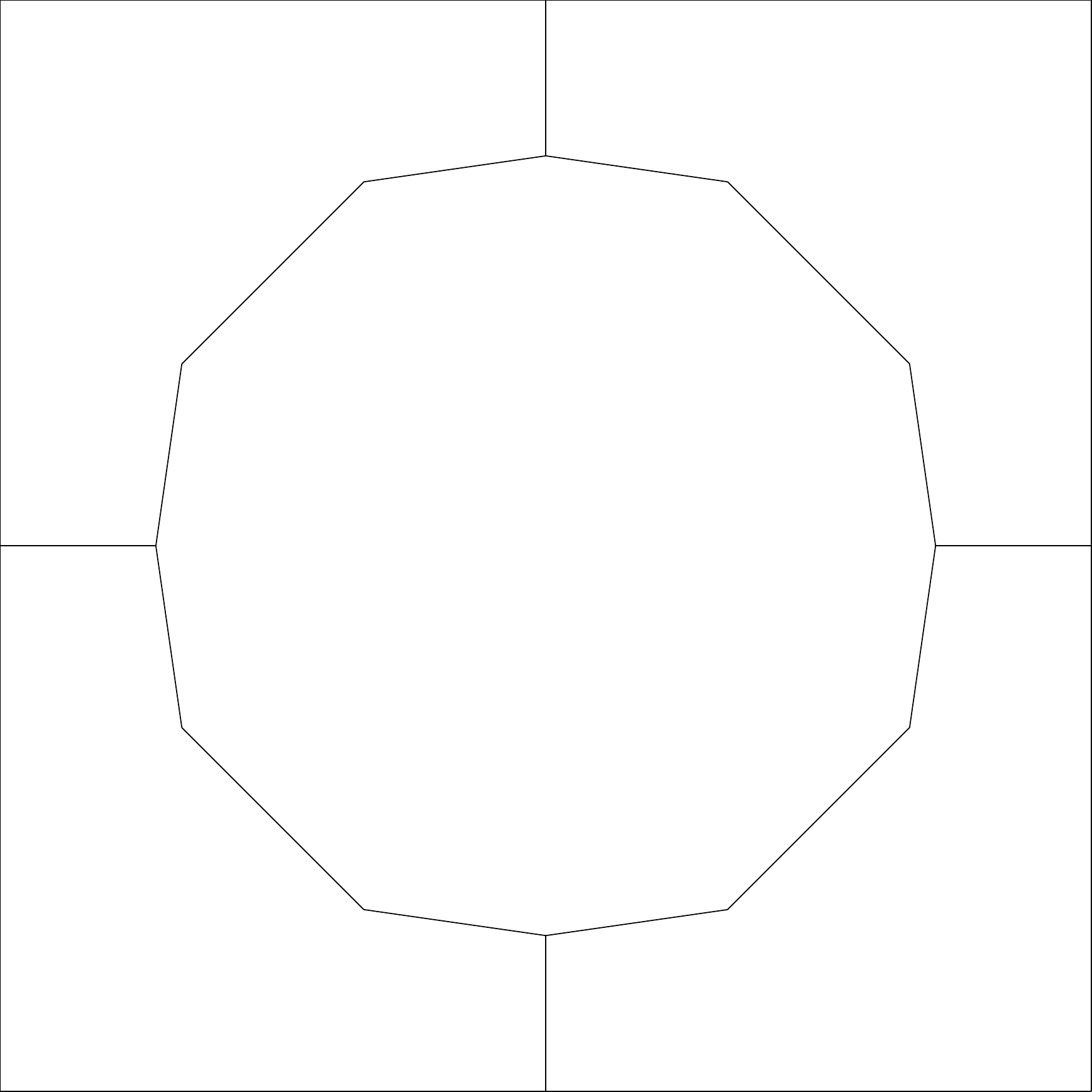} \ 
\includegraphics[width=1.2in]{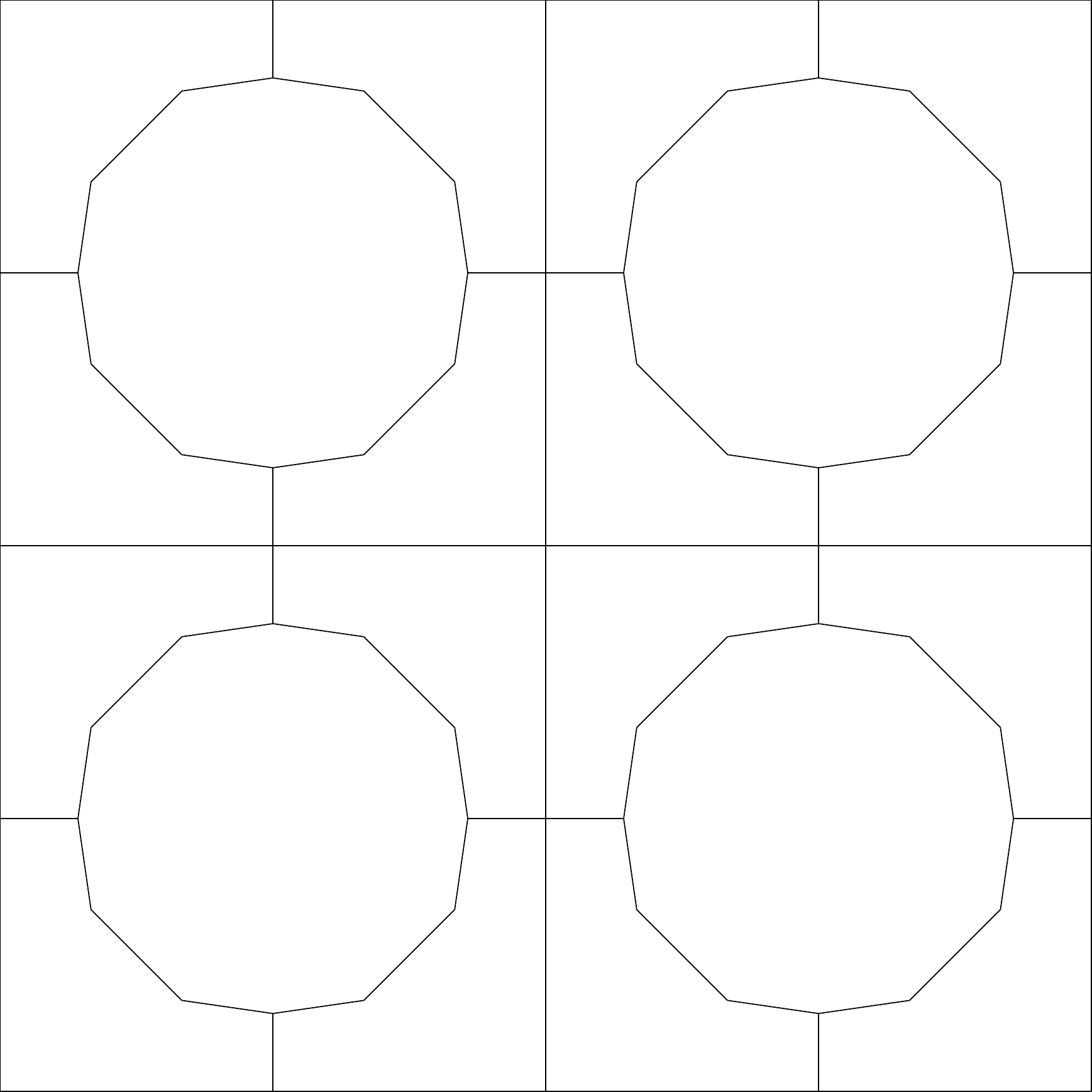} \ 
\includegraphics[width=1.2in]{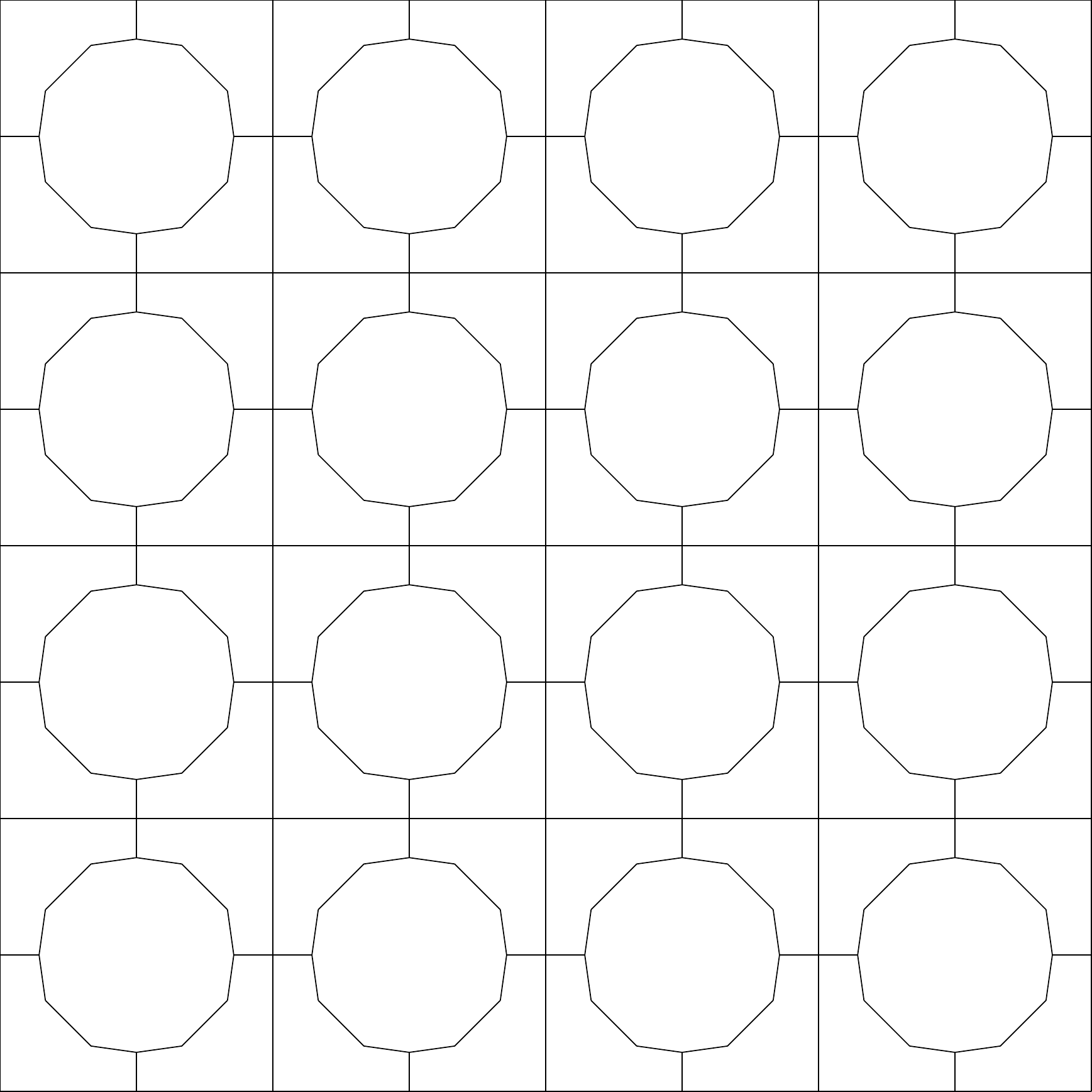}  
\caption{The polygonal meshes  consisting of dodecagons and heptagons are employed in Test Example 1.  }
\label{g-12}
\end{center}
\end{figure}

\subsection{ Test Example 1}  
We carefully choose an exact solution which has a jumping stress at the interface,
\an{\label{s-1} \bu&=\p{-\mu^{-1}\sin(\pi x)\cos(2\pi x)\sin(\pi y)\cos(2\pi y)\\
                      \mu^{-1}\sin(\pi x)\cos(2\pi x)\sin(\pi y)\cos(2\pi y)}.
  } 
The  load function is computed using  the exact solution \eqref{s-1}  and the elasticity interface problem \eqref{model}; i.e.,
\an{\label{f-1}
   \bf&=\left\{ \ad {
  \frac{\pi^2}2 & \Big(  
     36\cos^3\pi x \cos^3 \pi y (2\lambda_0+1)\\ &
    +36 \cos^2 \pi x  \sin \pi y  \cos^2 \pi y  \sin \pi x (2\lambda_0+3) \\ &
    -30 \cos^3 \pi x  \cos \pi y(2\lambda_0+1) 
    -2 \cos^2 \pi x  \sin \pi y   \sin \pi x (18\lambda_0+23) \\ &
    -30 \cos \pi x  \cos^3  \pi y (2\lambda_0+1)  
    -2 \sin \pi x  \sin \pi y^2  \cos \pi y (10\lambda_0+19) \\ &
   +25 \cos \pi x  \cos \pi y (2\lambda_0+1)
    +5\sin \pi x  \sin \pi y(2\lambda_0+3)\Big), \\
4 \pi^2 & \sin \pi x \sin \pi y  (  \cos^2 \pi x - \cos^2\pi y) (2\lambda_0+1)\\ &
  \frac{\pi^2}2   \Big(  
     36\cos^3\pi x \cos^3 \pi y (2\lambda_0+1)\\ &
    +36 \cos^2 \pi x  \sin \pi y  \cos^2 \pi y  \sin \pi x (2\lambda_0+3) \\ &
    -30 \cos^3 \pi x  \cos \pi y(2\lambda_0+1) 
    -2 \cos^2 \pi x  \sin \pi y   \sin \pi x (18\lambda_0+23) \\ &
    -30 \cos \pi x  \cos^3  \pi y (2\lambda_0+1)  
    -2 \sin \pi x  \sin \pi y^2  \cos \pi y (10\lambda_0+19) \\ &
   +25 \cos \pi x  \cos \pi y (2\lambda_0+1)
    +5\sin \pi x  \sin \pi y(2\lambda_0+3)\Big). 
     } \right\} }
Note that the load function $\bf$ is independent of coefficient jumps.
It would ensure a non-smooth solution.
The exact solution $\bu(x,y)$ with $\mu_0=10$ and $\lambda_0=10$ is demonstrated in Figure \ref{g-s-1}.  
The patterned polygonal 
  meshes  consisting of dodecagons and heptagons  shown in Figure \ref{g-12} are employed in the computation. 
A weak finite element space, based on the lowest order  $k=1$; i.e.,
  $$
W_h=\big\{\{\bv_0, \bv_b\}:\bv_0|_T \in [P_1(T)]^2, \bv_b |_T\in P_{RM}(e), \forall T\in {\cal T}_h \big\},
$$ is used in computing this example.
The corresponding element is named $P^1$ WG element.
 Table \ref{t-11} lists the errors in the discrete norm for $\bu_h$ and in the $L^2$ norm for $\bu_0$,  for the $P^1$ WG element in various jump coefficient cases. 
 We have observed from Table \ref{t-11} that our WG method \eqref{32} for the elasticity interface problem \eqref{model} converges 
   at the optimal order of error estimate in the discrete norm for $\bu_h$ 
  independent of jump coefficients, which is consistent with what the theory expects.
 Furthermore, we have also observed our WG method   converges at the optimal order of error estimate for $\bu_0$ in the $L^2$ norm
  independent of jump coefficients.

\begin{table}[ht]
  \centering   \renewcommand{\arraystretch}{1.05}
  \caption{Error profile of $P_1$ WG on polygonal meshes shown in Figure \ref{g-12} for \eqref{s-1}.}
\label{t-11}
\begin{tabular}{c|cc|cc}
\hline
level & $\|Q_0 u-  u_0 \|_0 $  &rate &  $\3bar Q_h u- u_h \3bar $ &rate   \\
\hline
 &\multicolumn{4}{c}{by the $P_1$ WG element ($\mu_0=1$, $\lambda_0=1$)  } \\ \hline
 4&    0.170E-01 &  1.9&    0.144E+01 &  1.0\\
 5&    0.433E-02 &  2.0&    0.722E+00 &  1.0\\
 6&    0.109E-02 &  2.0&    0.361E+00 &  1.0\\
 7&    0.272E-03 &  2.0&    0.181E+00 &  1.0\\
\hline
 &\multicolumn{4}{c}{by the $P_1$ WG element ($\mu_0=10^2$, $\lambda_0=10^{-2}$)  } \\ \hline
 4&    0.108E-01 &  1.9&    0.785E+00 &  0.9 \\
 5&    0.275E-02 &  2.0&    0.395E+00 &  1.0 \\
 6&    0.689E-03 &  2.0&    0.198E+00 &  1.0 \\
 7&    0.172E-03 &  2.0&    0.989E-01 &  1.0 \\
 \hline 
 &\multicolumn{4}{c}{by the $P_1$ WG element ($\mu_0=10^{-1}$, $\lambda_0=10$)  } \\ \hline
 4&    0.106E+00 &  2.0&    0.747E+01 &  1.0 \\
 5&    0.291E-01 &  1.9&    0.375E+01 &  1.0 \\
 6&    0.766E-02 &  1.9&    0.188E+01 &  1.0 \\
 7&    0.196E-02 &  2.0&    0.943E+00 &  1.0 \\
 \hline
\end{tabular}%
\end{table}%

\subsection{Test Example 2}  
To take advantage of superconvergence of weak Galerkin finite 
   elements, and to show its order-two superconvergence (\cite{Al-Taweel,Mu,Ye-lift2,Ye-Stokes}) ,  the exact solution of \eqref{w-f}  is  chosen to be a piecewise polynomial of
   degree 5 as follows 
\an{\label{s-2} \bu&=\p{-\mu^{-1} y (4 y-1) (4 y-3) (4 x-1) (4 x-3)\\
                      \mu^{-1} x (4 y-1) (4 y-3) (4 x-1) (4 x-3)}.
  }   
  In addition, the stabilizer $s(\cdot,\cdot)$ in the WG algorithm \eqref{32} is dropped in order to achieve the superconvergence.
 The load function $\bf$ is computed using the exact solution $\bu$ in \eqref{s-2} and the elasticity interface equation \eqref{w-f}.  
Again, the load $\bf$ is smooth which implies a non-smooth solution $\bu$.

\begin{figure}[ht]
\begin{center}
 \setlength\unitlength{1pt}
    \begin{picture}(260,80)(0,3)
   \def\sqp{\begin{picture}(20,20)(0,0)  
       \multiput(0,0)(20,0){2}{\line(0,1){20}}  \multiput(0,0)(0,20){2}{\line(1,0){20}} 
      \end{picture}}
    \put(0,0){\setlength\unitlength{4pt} \begin{picture}(20,20)(0,0)
       \multiput(0,0)(20,20){1}{\sqp}       \end{picture}}
    \put(90,0){\setlength\unitlength{2pt} \begin{picture}(20,20)(0,0)
       \multiput(0,0)(20,0){2}{\multiput(0,0)(0,20){2}{\sqp}}  \end{picture}}
    \put(180,0){\setlength\unitlength{1pt} \begin{picture}(20,20)(0,0)
       \multiput(0,0)(20,0){4}{\multiput(0,0)(0,20){4}{\sqp}}  \end{picture}} 
    \end{picture}
    \end{center}
\caption{  The first three rectangular grids used in the computation of Table \ref{t-2-1}. }
\label{g-g-s}
\end{figure}
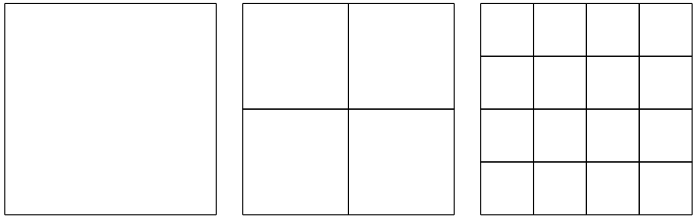

  The uniform rectangular meshes shown in Figure \ref{g-g-s} are employed.  
The numerical results of $P_2$ ($k=2$) and $P_3$ ($k=3$) WG finite elements are listed in
  Table \ref{t-2-1}.
We can see from  Table \ref{t-2-1} that instead of $3$rd order and $2$nd order of convergence  in the $L^2$ norm and the discrete norm for $P_2$ finite element
   solutions, 
     we have 
    $5$th order and and $4$th order of superconvergence in the $L^2$ norm and the discrete  norm  respectively.
Even though the exact solution $\bu$ is a piecewise $P_5$ polynomial, the $P_3$ WG
   finite element solution $\bu_h$ is exactly $Q_h\bu$, if no rounding errors exist.
These results are listed in Table \ref{t-2-1} to see how computer round-off errors behave.

\begin{table}[ht]
  \centering   \renewcommand{\arraystretch}{1.05}
  \caption{ Error profile on rectangular grids
       (Figure \ref{g-g-s}) for \eqref{s-2}. }
\label{t-2-1}
\begin{tabular}{c|cc|cc}
\hline
level & $\|Q_h u-  u_h \|_0 $  &rate &  $\3bar Q_h u- u_h \3bar $ &rate   \\
\hline
 &\multicolumn{4}{c}{by the $P_2$ WG element ($\mu_0=1$, $\lambda_0=1$) } \\ \hline
 3&    0.117E-01 &  5.0&    0.507E+00 &  4.0 \\
 4&    0.366E-03 &  5.0&    0.320E-01 &  4.0 \\
 5&    0.116E-04 &  5.0&    0.209E-02 &  3.9 \\
 6&    0.389E-06 &  4.9&    0.150E-03 &  3.9 \\
\hline
 &\multicolumn{4}{c}{by the $P_3$ WG element ($\mu_0=1$, $\lambda_0=1$) }  \\ \hline 
 3&    0.128E-12 &  0.0&    0.199E-11 &  0.0 \\
 4&    0.519E-12 &  0.0&    0.677E-11 &  0.0 \\
 5&    0.235E-11 &  0.0&    0.248E-10 &  0.0 \\
 \hline
 &\multicolumn{4}{c}{by the $P_2$ WG element ($\mu_0=10^2$, $\lambda_0=10^{-2}$) } \\ \hline
 3&    0.101E-01 &  5.2&    0.366E+00 &  4.2 \\
 4&    0.314E-03 &  5.0&    0.229E-01 &  4.0 \\
 5&    0.982E-05 &  5.0&    0.143E-02 &  4.0 \\
 6&    0.307E-06 &  5.0&    0.893E-04 &  4.0 \\
\hline
 &\multicolumn{4}{c}{by the $P_3$ WG element ($\mu_0=10^2$, $\lambda_0=10^{-2}$) } \\ \hline
 3&    0.847E-13 &  0.0&    0.157E-11 &  0.0 \\
 4&    0.407E-12 &  0.0&    0.613E-11 &  0.0 \\
 5&    0.180E-11 &  0.0&    0.236E-10 &  0.0 \\
 \hline
\end{tabular}%
\end{table}%

\subsection{Test Example 3}
 In this test, we shall compute the numerical solutions arsing from WG scheme \eqref{32} for the   elasticity interface model problem \eqref{w-f} when the exact solution is unknown. 
That is,  we choose a smooth load $\bf$ directly.   Find $\bu \in H^1_0(\Omega)^2$
  such that
 \an{\label{w-f3}  
  ( 2 \mu\bepsilon(\bu), \bepsilon(\bv) +(\lambda 
   \nabla \cdot \bu , \nabla \cdot \bv) &=(\p{0\\ 1} ,\bv) 
   \quad \forall \bv \in H^1_0(\Omega)^2, 
 } where $\Omega=(0,1)^2$, $\mu$ and $\lambda$ are defined in \eqref{c-2}.
The rectangular meshes shown in Figure \ref{g-g-s} are employed. The $P_4$ (k=4) weak Galerkin solution on the 4th rectangular grids shown as in
   Figure \ref{g-g-s}, is plotted in Figure \ref{g-s-3},
with $\mu_0=1$ and $\lambda_0=1$.

\begin{figure}[htb]\begin{center}
\includegraphics[width=2.3in]{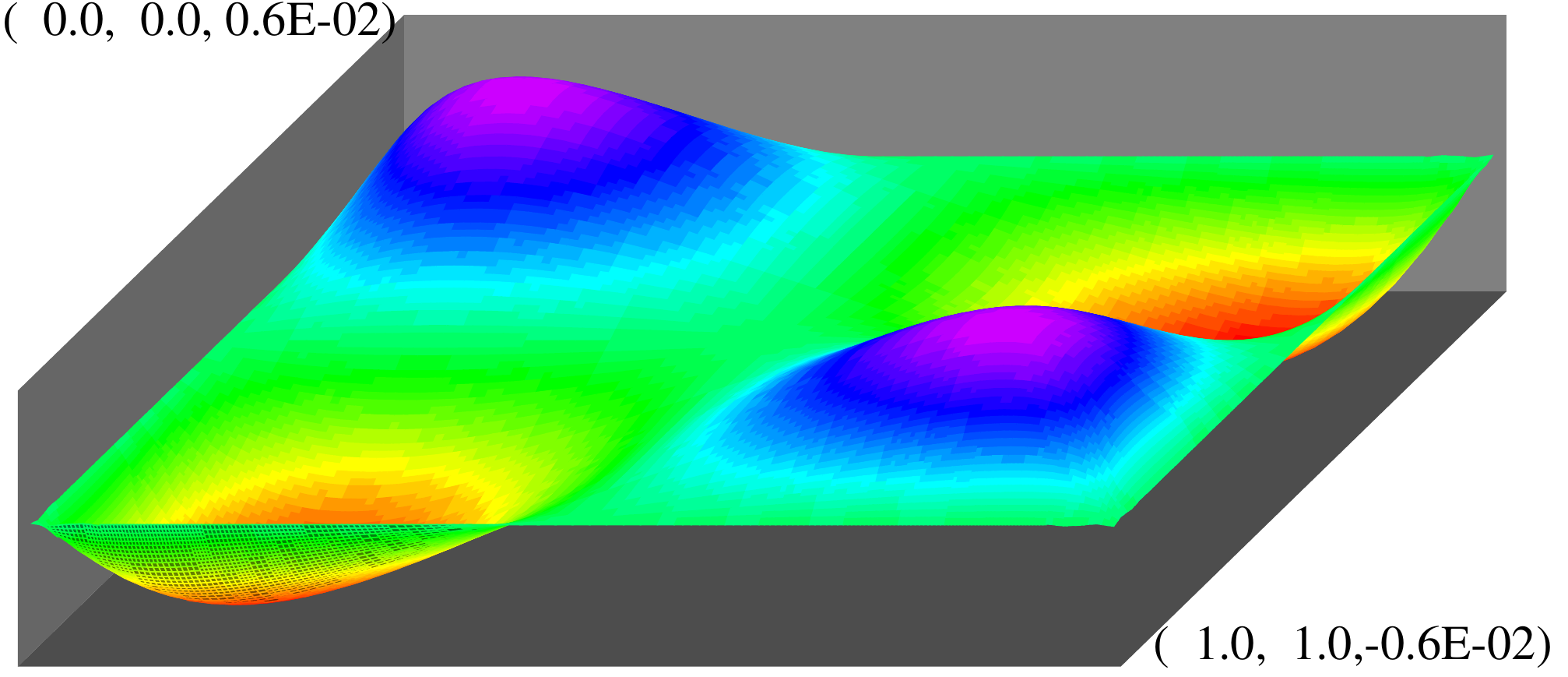} \  
\includegraphics[width=2.3in]{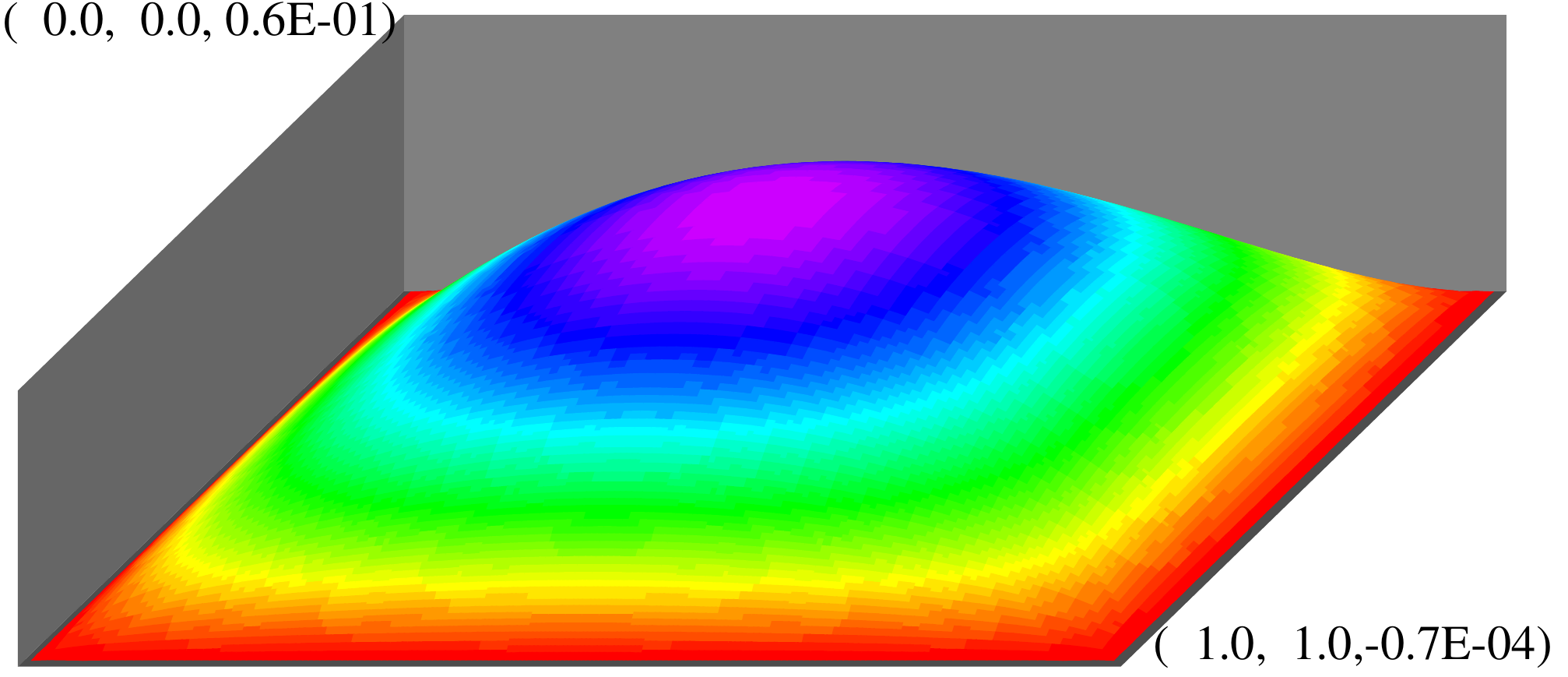}
\caption{The $P_4$ WG solution   ($\mu_0=1$, $\lambda_0=1$)
    on the 4th rectangular grids (cf. Figure \ref{g-g-s}).  }
\label{g-s-3}
\end{center}
\end{figure}

We plot the solutions of the other two interface cases in Figures \ref{g-s-31} and \ref{g-s-32}.
In these two cases, due to re-entrant interface corners,  the solutions are not 
   smooth near these 4 points.
We can observe from 
   Figures \ref{g-s-31} and \ref{g-s-32} that the large jumps of the discontinuous finite element solutions arise at the four points. Additionally we plot the deformation of the linear elasticity for the two cases, under the unit
  upward load,  in Figure \ref{g-v}.
On the left, as we have a hard material in the center, the deformation is almost identical 
  in the center square.  As we have the whole material highly compressible, the deformation 
  is almost one directional, i.e.,  the deformation field is irrotational.
On the right,  as we have the softer material in the center, the deformation is almost limited
  inside the center square.  Here we have nearly incompressible material everywhere.
  The deformation is nearly solenoidal, i.e., the deformation almost
   circles around inside the center   square.

\begin{figure}[htb]\begin{center}
\includegraphics[width=2.3in]{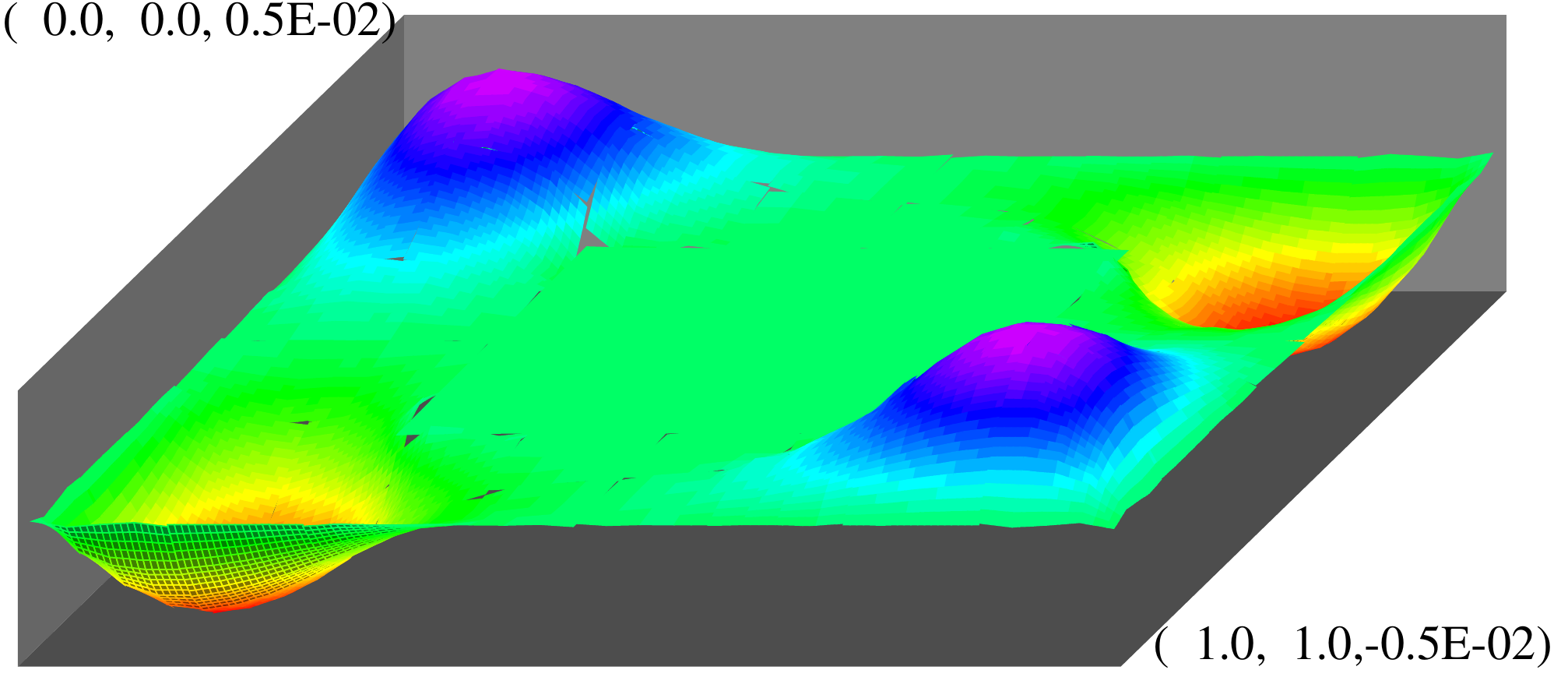} \ 
\includegraphics[width=2.3in]{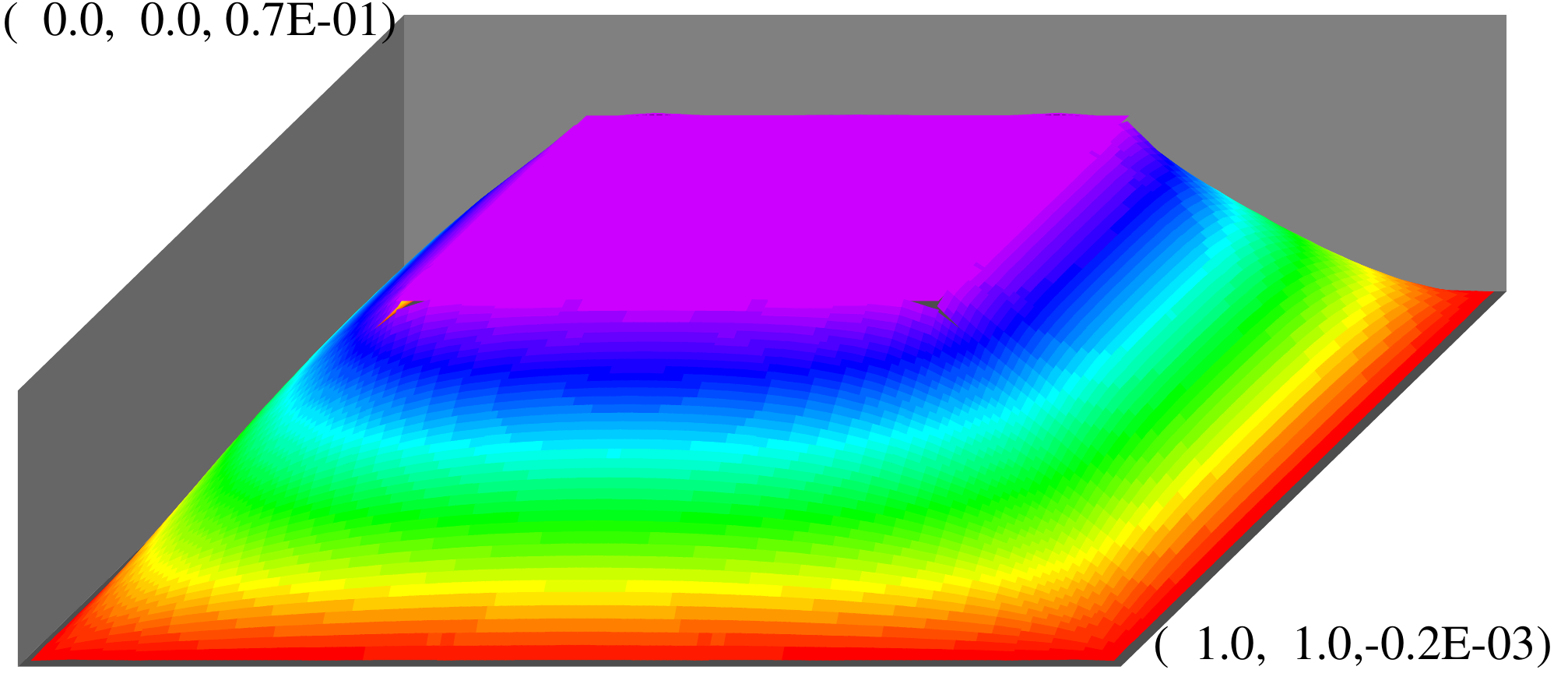}
\caption{The $P_4$ WG solution  ($\mu_0=10^3$, $\lambda_0=10^{-3}$)
    on the 4th rectangular grid (cf. Figure \ref{g-g-s}).  }
\label{g-s-31}
\end{center}
\end{figure}

\begin{figure}[htb]\begin{center}
\includegraphics[width=2.3in]{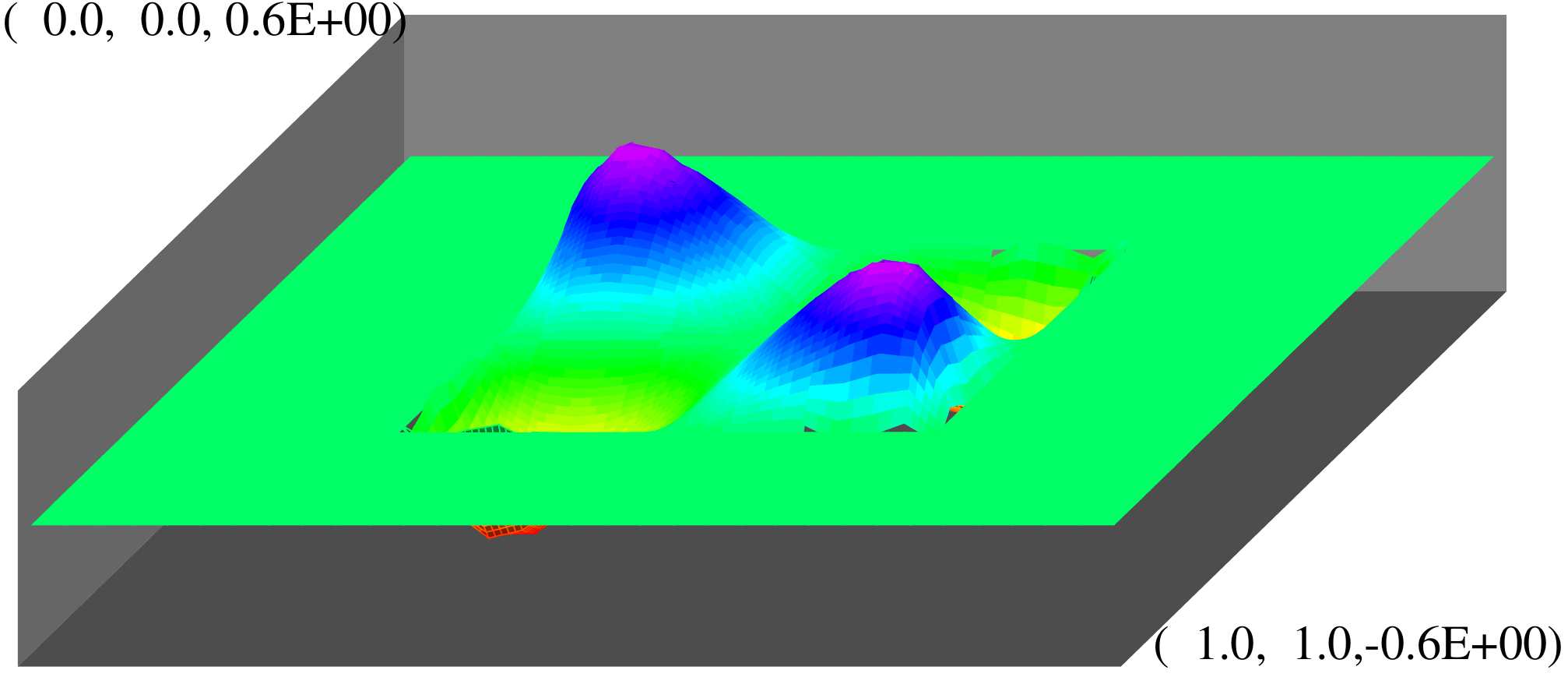}\    
\includegraphics[width=2.3in]{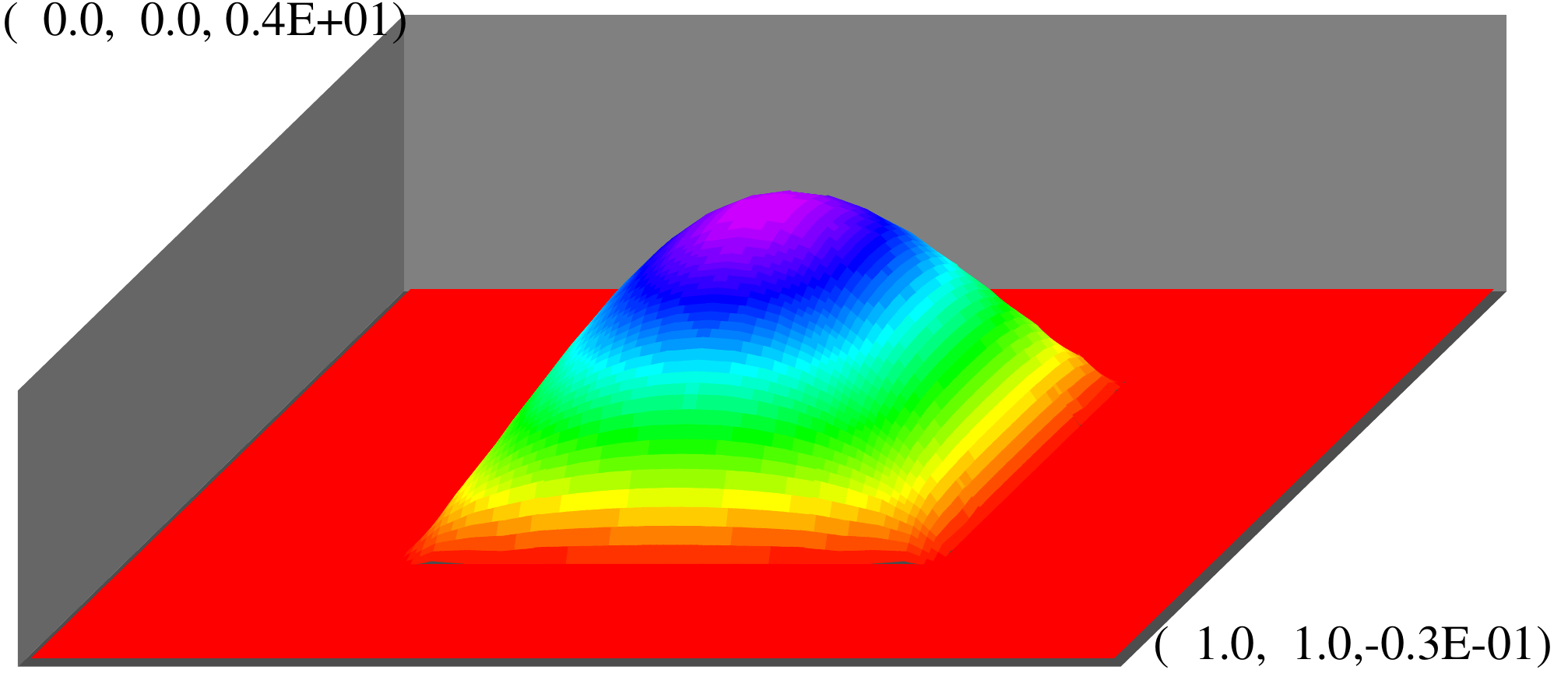}
\caption{The $P_4$ WG solution  ($\mu_0=10^{-3}$, $\lambda_0=10^{3}$)
    on the 4th rectangular grid (cf. Figure \ref{g-g-s}).  }
\label{g-s-32}
\end{center}
\end{figure}

\begin{figure}[htb]\begin{center}
\includegraphics[width=2.1in]{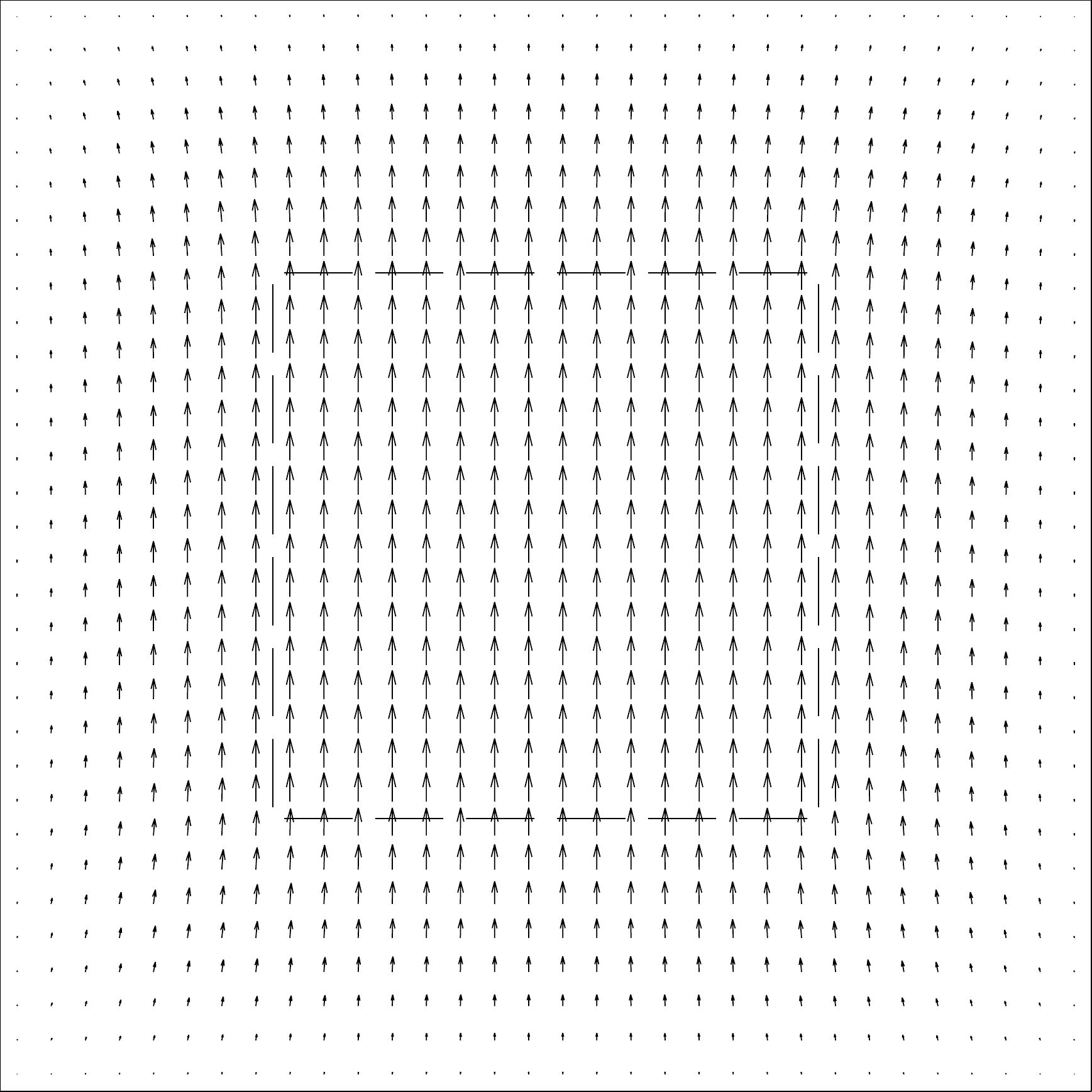}\quad
\includegraphics[width=2.1in]{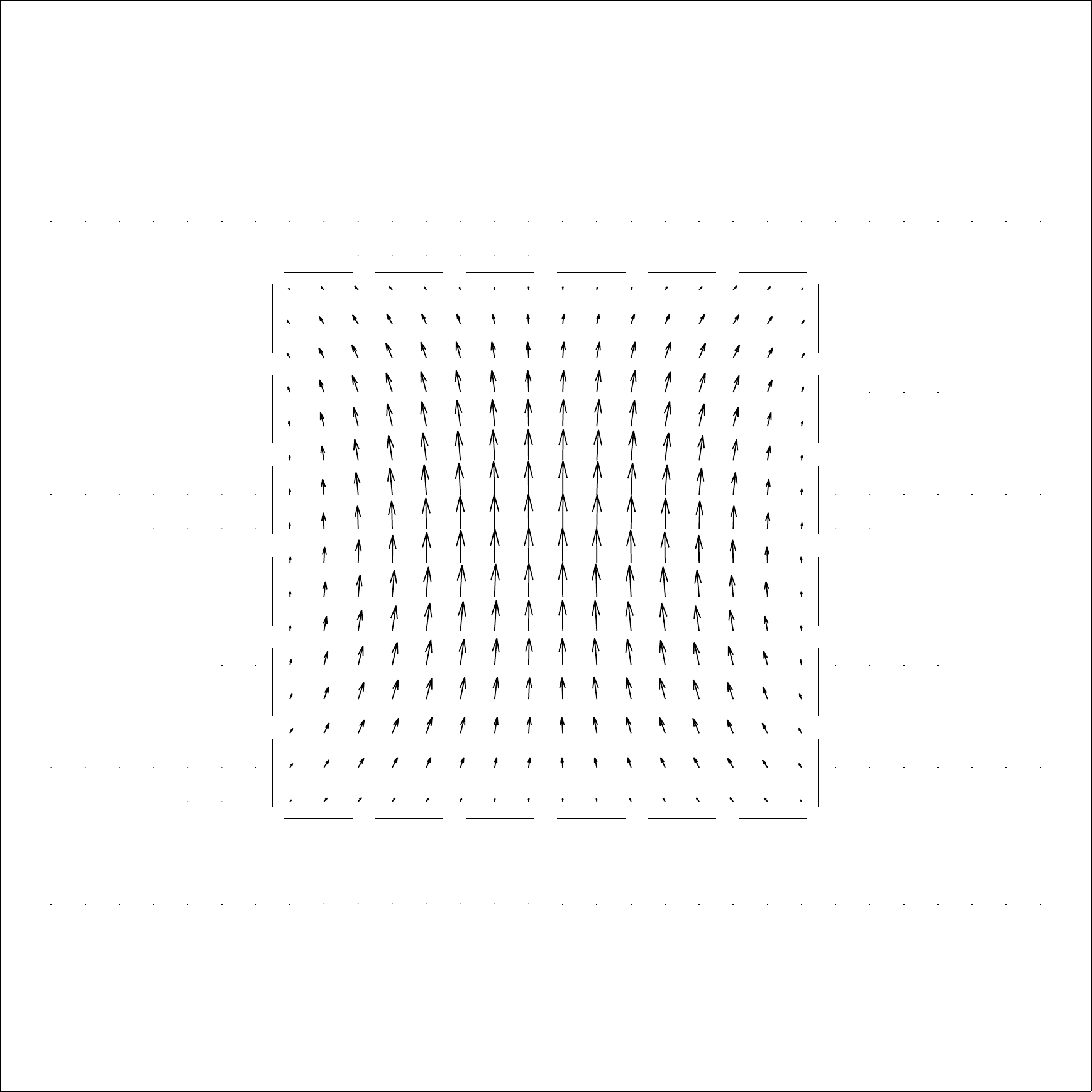} 
\caption{The deformation of a linear elasticity under the unit upward load. \  
     Left: $\mu_0=10^3$, $\lambda_0=10^{-3}$. \ 
    Right: $\mu_0=10^{-3}$, $\lambda_0=10^{3}$.  }
\label{g-v}
\end{center}
\end{figure}

\bibliographystyle{abbrv}
\bibliography{Ref}


\end{document}